\documentclass[reqno]{amsart}
\usepackage{graphicx}
\usepackage{amssymb}
\usepackage{amsfonts}
\usepackage{indentfirst}
\usepackage{amssymb,amsmath,amsthm}
\usepackage{latexsym,bm}
\usepackage{graphicx}
\usepackage{times}
\usepackage{amsfonts}
\usepackage{indentfirst}
\usepackage[colorlinks,linkcolor=black,anchorcolor=blue,citecolor=blue]{hyperref}
\usepackage[dvipsnames]{pstricks}
\newtheorem{theorem}{Theorem}
\newtheorem{proposition}{Proposition}

\newtheorem{lemma}{Lemma}

\numberwithin{equation}{section}

\def\Z{\mathbb{Z}}

\def\Zp{\mathbb{Z}_{p}}
\def\Qp{\mathbb{Q}_{p}}

\def\Q{\mathbb{Q}}

\def\P{\mathbb{P}^1(\mathbb{Q}_{p})}

\def\Z2{\mathbb{Z}_{2}}
\def\m2#1{\ ({\rm mod} \ 2^{#1})}

\begin{document}

\title{Rational map $ax+1/x$ on  the projective line over $\mathbb{Q}_{p}$ }

\author{Shilei Fan}

\address{School of Mathematics and Statistics, Hubei Key Laboratory of Mathematical Sciences, Central  China Normal University,  Wuhan, 430079, China  \&\& Aix-Marseille Universit\'e, Centrale Marseille, CNRS, Institut de Math\'ematiques de Marseille, UMR7373, 39 Rue F. Joliot Curie 13453, Marseille, FRANCE}
\email{slfan@mail.ccnu.edu.cn}

\author{Lingmin Liao}
\address{LAMA, UMR 8050, CNRS,
Universit\'e Paris-Est Cr\'eteil Val de Marne, 61 Avenue du
G\'en\'eral de Gaulle, 94010 Cr\'eteil Cedex, France}
\email{lingmin.liao@u-pec.fr}

\thanks{The research of S. L. FAN on this project  has received funding from the European  Research Council(ERC) under the European Union's Horizon 2020 research  and innovation programme under grant agreement No 647133(ICHAOS). It has also been  supported by NSF of China (Grant No.s 11401236 and 11471132).}
\begin{abstract}
 The dynamical structure of the rational map $ax+1/x$ on the projective line $\P$ over the field $\mathbb{Q}_p$ of $p$-adic numbers 
 is described for $p\geq 3$. 

\end{abstract}
\subjclass[2010]{Primary 37P05; Secondary 11S82, 37B05}
\keywords{$p$-adic dynamical system, rational maps,  minimal decomposition, subshift of finite type}
\maketitle


\section{Introduction}

For a prime number $p$,  let $\mathbb{Q}_p$ be the field of $p$-adic numbers and $\P$ its projective line. 
Recently, polynomials and rational maps of $\mathbb{Q}_p$ have been studied as dynamical systems on $\mathbb{Q}_p$ or $\P$. It turns out that these $p$-adic dynamical systems are quite different to the dynamical systems in Euclidean spaces. See for example, \cite{Anashin-Khrennikov-AAD,Baker-Rumely-book,SilvermanGTM241} and their bibliographies therein.  

For polynomials and rational maps of $\mathbb{Q}_p$, we can find two different kinds of subsystems exhibiting totally different dynamical behavior. One is $1$-Lipschitz dynamical systems and the other is $p$-adic repellers. 

A $1$-Lipschitz $p$-adic dynamical system can usually be fully described by showing all its minimal subsystems. Polynomials with coefficients in the ring $\Zp$ of $p$-adic integers and rational maps with good reduction are two important families of $1$-Lipschitz dynamical systems. In \cite{FL11}, the authors proved the following structure theorem for polynomials in $\mathbb{Z}_p[x]$. The same structure theorem for good reduction maps with degree at least $2$ was proved in \cite{FFLW2016}. 
\begin{theorem}[\cite{FL11}, Theorem 1]\label{thm-decomposition}
 Let $f \in \mathbb{Z}_p[x]$ be a polynomial of integral coefficients with degree $\geq 2$. We have the following decomposition
$$
     \mathbb{Z}_p = \mathcal{P} \bigsqcup \mathcal{M} \bigsqcup \mathcal{B}
$$
where $ \mathcal{P}$ is the finite set consisting of all periodic points of
$f$, $\mathcal{M}= \bigsqcup_i \mathcal{M}_i$ is the union of all (at most countably
many) clopen invariant sets such that each $\mathcal{M}_i$ is a finite union
of balls and each subsystem $f: \mathcal{M}_i \to \mathcal{M}_i$ is minimal, and each
point in $\mathcal{B}$ lies in the attracting basin of a periodic orbit or of
a minimal subsystem.
\end{theorem}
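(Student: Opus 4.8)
The plan is to reduce the continuous system $(\Zp,f)$ to the family of its reductions modulo $p^n$ and to read off the decomposition from how the periodic orbits of these finite systems behave as $n\to\infty$. First I would record that $f\in\Zp[x]$ is $1$-Lipschitz on $\Zp$, since $f'$ again has integral coefficients and hence $|f(x)-f(y)|_p\le|x-y|_p$; consequently $f$ maps each ball into a ball of the same or smaller radius and induces a well-defined map $f_n$ on the finite ring $\mathbb{Z}/p^n\mathbb{Z}$ for every $n\ge1$, compatibly with the projections $\mathbb{Z}/p^{n+1}\mathbb{Z}\to\mathbb{Z}/p^n\mathbb{Z}$. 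Every point of a finite system is preperiodic, so $f_n$ has finitely many cycles, and the preimage in $\Zp$ of a length-$\ell$ cycle is a clopen set $E$ that is a disjoint union of $\ell$ balls of radius $p^{-n}$, cyclically permuted by $f$. The objects to track are therefore the cycles of the $f_n$, organized into a tree whose level-$(n+1)$ vertices (cycles of $f_{n+1}$) are joined to the level-$n$ vertex into which they project.

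The engine of the proof is a local trichotomy describing how one cycle lifts to the next level. Fix a length-$\ell$ cycle of $f_n$ with preimage $E=B_0\sqcup\cdots\sqcup B_{\ell-1}$ and consider the first-return map $g=f^{\ell}$ on one ball $B_0=c+p^n\Zp\cong\Zp$. Being $1$-Lipschitz, $g$ induces a self-map $\bar g$ of the $p$ sub-balls of $B_0$, and to leading order a Taylor expansion gives $\bar g(t)\equiv\alpha t+\beta\pmod p$ with $\alpha\equiv(f^{\ell})'(c)$. I would classify the lift by $\bar g$: when $\alpha\equiv1$ and $\beta\not\equiv0\pmod p$ the map $\bar g$ is a single $p$-cycle, so $E$ carries one child cycle of length $p\ell$ (the \emph{growing} case); when $\bar g$ is the identity ($\alpha\equiv1,\ \beta\equiv0$) each sub-ball is invariant and $E$ splits into $p$ disjoint length-$\ell$ children on which one recurses (the \emph{splitting} case); and when $\alpha\not\equiv1$ the affine residue map has a unique fixed point, so exactly one sub-ball is periodic while the remaining $p-1$ sub-balls are tails feeding into it (the \emph{contracting} case). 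The delicate point is the borderline $\alpha\equiv1$, where the linear term is invisible and one must expand $g$ to higher order and control the $p$-adic size of the relevant coefficients \emph{uniformly in} $n$ in order to decide between growing and splitting.

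With the trichotomy in hand, I would follow a nested sequence of cycles down the tree, where exactly one of three things happens. If the length eventually stabilizes at some $\ell$ (the contracting case recurring with the fixed sub-ball nesting down), the nested balls shrink to a single point, which is periodic; these points constitute $\mathcal{P}$. If from some level $n_0$ the branch is growing at every subsequent level, then the preimage of the cycle is the \emph{same} finite union of balls for all $n\ge n_0$, the dynamics is transitive at every scale, hence minimal; these clopen finite unions of balls are the components $\mathcal{M}_i$. Since each $\mathcal{M}_i$ contains a ball of some radius $p^{-n_0}$ and balls of fixed radius are finite in number, there are at most countably many $\mathcal{M}_i$, and they are pairwise disjoint. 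Everything else—points lying on tails at infinitely many levels—forms $\mathcal{B}$; by $1$-Lipschitzness the orbit of such a point is drawn into the child cycles it follows and so converges to, i.e. lies in the basin of, a periodic orbit or a minimal component. Mutual exclusivity and exhaustiveness of the three branch behaviors along every nested sequence give the disjointness of $\mathcal{P},\mathcal{M},\mathcal{B}$ and that their union is all of $\Zp$.

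I expect the main obstacle to be twofold. First, making the local trichotomy rigorous in the borderline $\alpha\equiv1$ regime requires precise, $n$-uniform estimates on the higher Taylor coefficients of the return map $f^{\ell}$, both to guarantee that a growing branch keeps growing (yielding genuine minimality rather than an eventual split) and to certify that a fully growing branch really is minimal with invariant set a \emph{finite} union of balls rather than a Cantor set. Second, the finiteness of $\mathcal{P}$ does not follow from the finite-level picture alone: one needs the global input that a degree-$d$ polynomial has at most $d-1$ critical points, which bounds the number of attracting cycles, together with a separate control of the indifferent cycles that can stabilize. Once these two points are secured, the bookkeeping of the cycle tree converts directly into the stated decomposition.
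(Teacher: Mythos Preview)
The paper does not prove this theorem; it is quoted as background from \cite{FL11} (Fan--Liao, \emph{Adv.\ Math.}\ 228 (2011)), so there is no proof in the present paper to compare against. Your sketch is, however, essentially the strategy carried out in \cite{FL11}: the reduction to the finite systems $\mathbb{Z}/p^n\mathbb{Z}$, the tree of cycles, and the local trichotomy for how a cycle lifts (what \cite{FL11} calls a cycle that \emph{grows}, \emph{splits}, or \emph{grows tails}) are exactly the ingredients there, and the identification of $\mathcal{P}$, $\mathcal{M}$, $\mathcal{B}$ with the three long-run branch behaviors is the intended outcome.

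One correction on the second obstacle you flag: the finiteness of $\mathcal{P}$ is not obtained via ``at most $d-1$ critical points bound the number of attracting cycles''---that complex-dynamical heuristic has no direct $p$-adic analogue in this setting. In \cite{FL11} finiteness comes instead from the cycle-tree analysis itself: the possible exact periods of a $\Zp$-periodic point are shown to lie in a finite set (of the shape $\ell\cdot r\cdot p^{s}$ with $\ell$, $r$, $s$ bounded in terms of $f$ and $p$), after which each equation $f^{k}(x)=x$ contributes only finitely many roots. Your first obstacle---uniform higher-order control in the borderline $\alpha\equiv 1$ case, so that a growing branch keeps growing and the resulting minimal set is a genuine finite union of balls---is real and is exactly where the technical work in \cite{FL11} lies.
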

The decomposition in Theorem \ref{thm-decomposition} is usually referred to as a \emph{minimal decomposition} and the invariant subsets $\mathcal{M}_i$ are called {\em minimal components}. 
In the literature, the minimality of the polynomial (or $1$-Lipschitz) dynamical systems on the whole space $\mathbb{Z}_p$ was widely studied \cite{Ana94, AKY11, CP11Ergodic, DP09, FLYZ07, FLSq, FLCh, Jeong2013,OZ75}.

\medskip
The $p$-adic repellers (see definition in page 220 of  \cite{FLWZ07}) are expanding dynamical systems which have positive topological entropy and thus exhibit chaotic behaviors. In \cite{FLWZ07}, it is proved that a transitive $p$-adic repeller is isometrically (hence topologically) conjugate to a subshift of finite type where a suitable metric is defined. A general method was also proposed in \cite{FLWZ07} to find subshifts of finite type subsystems in a $p$-adic polynomial dynamical system.  We remark that Thiran, Verstegen and Weyers \cite{TVW89} and Dremov, Shabat and Vytnova \cite{DSV06} studied the chaotic behavior of $p$-adic quadratic polynomial dynamical systems. Woodcock and Smart \cite{WS98} proved that the so-called $p$-adic
logistic map ${x^p-x \over p}$ is topologically conjugate to the full shift on the symbolic system with $p$ symbols.

On the other side, the dynamical properties of the fixed points of the rational maps have been studied in the space $\mathbb{C}_p$ of $p$-adic complex numbers \cite{ARS13,KM06,MR04,Satt2015} and in the adelic space \cite{DKM07}. The Fatou and Julia theory of the rational maps on $\mathbb{C}_p$, and  on the Berkovich space over $\mathbb{C}_p$, are also developed 
\cite{Benedetto-Hyperbolic-maps,Baker-Rumely-book,Hsia-periodic-points-closure,Rivera-Letelier-Dynamique-rationnelles-corps-locaux,SilvermanGTM241}.  
However,  the global dynamical structure of rational maps on $\Q_p$ remains unclear, though the rational maps of degree one are totally characterized in \cite{FFLW2014} .



In the present article, we suppose $p\geq 3$ and investigate the following special class of rational maps of degree $2$:
 \begin{align}
\phi(x)=ax+\frac{1}{x}, \quad a\in \Q_p\setminus \{0\}.
\end{align}
We distinguish three cases: (1) $|a|_p=1$, (2) $|a|_p>1$, (3) $|a|_p<1$. 

Observe that  $$\phi(x)=ax+\frac{1}{x}=\frac{ax^2-1}{x}.$$ When $|a|_p=1$, the map $\phi$ has good reduction (see definition in page 58 of \cite{SilvermanGTM241}).
By Theorem 1.2 of \cite{FFLW2016}, we immediately have the following structure theorem.
\begin{theorem}\label{thm2}
Let $\phi(x)=ax+\frac{1}{x}$  with $a\in \Q_p\setminus \{0\}$.
 If $|a|_p=1$, the dynamical system $(\P,\phi)$ can be decomposed into 
$$\P= \mathcal{P} \bigsqcup \mathcal{M} \bigsqcup \mathcal{B}
$$
where $ \mathcal{P}$ is the  finite set  consisting of all periodic points of
$\phi$, $\mathcal{M}= \bigsqcup_i \mathcal{M}_i$ is the union of all (at most countably
many) clopen invariant sets such that each $\mathcal{M}_i$ is a finite union
of balls and each subsystem $\phi: \mathcal{M}_i \to \mathcal{M}_i$ is minimal, and each
point in $\mathcal{B}$ lies in the attracting basin of a periodic orbit or of
a minimal subsystem.
\end{theorem}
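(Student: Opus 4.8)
The plan is to recognize this statement as a direct application of the good-reduction structure theorem, Theorem 1.2 of \cite{FFLW2016}, so that essentially all the work lies in verifying that $\phi$ has good reduction whenever $|a|_p=1$. Once that reduction property is in place, the decomposition is not a new argument but an immediate transcription of the cited result.

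First I would rewrite $\phi$ in homogeneous coordinates. Setting $x=X/Y$ and clearing denominators gives $\phi([X:Y])=[aX^2-Y^2:XY]$, so that $\phi=[F:G]$ with $F(X,Y)=aX^2-Y^2$ and $G(X,Y)=XY$, two binary forms of degree $2$. Since $\deg\phi=2$, the forms $F$ and $G$ share no common factor, and because $|a|_p=1$ all their coefficients lie in $\Zp$ with at least one a unit; hence $[F:G]$ is already a normalized representative, which is the correct setting in which to test for good reduction.

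Next I would confirm that the reduction $\bar\phi=[\bar F:\bar G]$ still has degree $2$, which is exactly the good-reduction condition. The reduced map is $[\bar a X^2-Y^2:XY]$, and the zeros of $\bar G=XY$ on $\PF$ are $[1:0]$ and $[0:1]$; evaluating $\bar F$ there gives $\bar a\neq 0$ (using $|a|_p=1$) and $-1\neq 0$ respectively, so $\bar F$ and $\bar G$ have no common zero and $\deg\bar\phi=2$. Equivalently, and more cleanly, I would compute the resultant of the two quadratic forms, obtaining $\mathrm{Res}(F,G)=-a$, which is a $p$-adic unit; this single computation is the one concrete step I expect to carry out, and it certifies good reduction at once.

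With good reduction established and $\deg\phi=2\ge 2$, Theorem 1.2 of \cite{FFLW2016} applies verbatim and produces the asserted decomposition $\P=\mathcal{P}\sqcup\mathcal{M}\sqcup\mathcal{B}$ together with all the stated properties of the three pieces. I do not anticipate any serious obstacle: the only points requiring care are the passage to normalized homogeneous coordinates and the identification of $\mathrm{Res}(F,G)$ with a unit, after which the minimal decomposition follows as a direct corollary of the cited structure theorem rather than from any additional dynamical analysis.
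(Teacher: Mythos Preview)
Your proposal is correct and matches the paper's approach exactly: the paper simply notes that $\phi(x)=(ax^2+1)/x$ has good reduction when $|a|_p=1$ and then invokes Theorem~1.2 of \cite{FFLW2016} without further argument, which is precisely what you do (with the added detail of the resultant computation). One harmless slip: $ax+1/x$ homogenizes to $[aX^2+Y^2:XY]$ rather than $[aX^2-Y^2:XY]$, but the resultant is $\pm a$ either way, so the good-reduction conclusion is unaffected.
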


We are thus left to study the rest two cases. The following are our main theorems. We remark that in both cases, $\infty\in \P$ is a fixed point of $\phi$ with multiplier $1/a$.

\begin{theorem}\label{main-thm-1}
Let $\phi(x)=ax+\frac{1}{x}$  with $ a\in \Q_p\setminus \{0\}$.  If $|a|_p> 1$, the  dynamical structure of the system $(\P,\phi)$ is described  as follows.
\begin{itemize}
\item[(1)] If $\sqrt{1-a}\notin\Q_p$, then \[\forall x\in \Q_p, \quad \lim_{n\to\infty} \phi^n(x)=\infty.\]
\item[(2)] If $\sqrt{1-a}\in\Q_p$, then there exists an invariant set $\mathcal{J}$ such that the subsystem $(\mathcal{J},\phi)$ is topologically conjugate to $(\Sigma_{2}, \sigma),$ the full shift of two symbols. Further, 
 $$\forall x\in \Qp\setminus \mathcal{J}, \ \ \lim_{n\to \infty}\phi^n(x)=\infty.$$

\end{itemize} 
\end{theorem}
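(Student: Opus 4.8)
\emph{Proof proposal.} Write $a=p^{-k}u$ with $k=-v_p(a)\ge 1$ and $u\in\Zp^\times$, and use the form $\phi(x)=(ax^2-1)/x$. The plan is to first confine all non-escaping dynamics to a single critical sphere, and then to analyse the return map on that sphere. The starting observation is an estimate of $|\phi(x)|_p$: since $|ax|_p=|a|_p|x|_p$ and $|1/x|_p=|x|_p^{-1}$, the two summands of $\phi$ have equal norm exactly on the sphere $S=\{|x|_p=|a|_p^{-1/2}\}$. First I would check that off $S$ the orbit runs off to the attracting fixed point: if $|x|_p>|a|_p^{-1/2}$ then $|\phi(x)|_p=|a|_p|x|_p>|x|_p$ and the image stays in this region, so $|\phi^n(x)|_p\to\infty$; if $|x|_p<|a|_p^{-1/2}$ then $|\phi(x)|_p=|x|_p^{-1}>|a|_p^{1/2}$ lands in the previous region. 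Because $\infty$ is fixed with multiplier $1/a$ of norm $<1$, the statement $|\phi^n(x)|_p\to\infty$ is exactly $\phi^n(x)\to\infty$ in $\P$. Note also that $S$ is nonempty only when $k$ is even; write $k=2m$ with $m\ge 1$.

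On $S$ I would substitute $x=p^my$ with $y\in\Zp^\times$, giving
\[
\phi(x)=p^{-m}\,\frac{uy^2+1}{y}.
\]
An orbit can avoid escaping only if $|uy^2+1|_p<1$, i.e. $uy^2\equiv-1\pmod p$, and this congruence is solvable precisely when $-u^{-1}$ is a quadratic residue mod $p$, equivalently when $1-a$ is a square in $\Qp$, i.e. $\sqrt{1-a}\in\Qp$. This settles case (1) at once: if $\sqrt{1-a}\notin\Qp$ (in particular whenever $k$ is odd, so that $S=\varnothing$), then $|uy^2+1|_p=1$ for every $y$, hence $|\phi(x)|_p=|a|_p^{1/2}>|a|_p^{-1/2}$ on all of $S$, and by the first step every point of $\Qp$ converges to $\infty$.

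For case (2) the congruence has two roots $\pm\bar y_0\in\F_p$, determining two residue balls $B_+,B_-\subset S$, each of radius $p^{-(m+1)}$; the identity $x_0^2=1/(1-a)$ shows that $B_+$ and $B_-$ contain the two finite fixed points $\pm 1/\sqrt{1-a}$. I would then compute $\phi'(x)=a-1/x^2$ and verify that its norm is constantly $|a|_p=p^{2m}$ on $B_+\cup B_-$ (at each fixed point the multiplier is $2a-1$, of norm $|a|_p$). Using that $y\mapsto uy^2+1$ has unit derivative and vanishes mod $p$ on each residue ball, the key claim is that $\phi$ maps each of $B_+,B_-$ bijectively, with uniform expansion factor $p^{2m}$, onto the \emph{common} ball $D=\{|z|_p\le p^{m-1}\}$, and that $B_+\cup B_-\subset S\subset D$. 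Granting this, set $\mathcal{J}=\bigcap_{n\ge 0}\phi^{-n}(B_+\cup B_-)$ and code each point by its itinerary $(s_n)_{n\ge 0}\in\Sigma_2$, where $s_n=\pm$ according to which of $B_\pm$ contains $\phi^n(x)$. Since each branch $\phi\colon B_\pm\to D$ is a bijection onto a ball containing both $B_+$ and $B_-$, every sequence in $\Sigma_2$ is realised, and the cylinders $B_{s_0}\cap\phi^{-1}B_{s_1}\cap\cdots\cap\phi^{-n}B_{s_n}$ form nested balls whose radii shrink by $p^{-2m}$ at each step, so their intersection is a single point. This gives a homeomorphism $\mathcal{J}\to\Sigma_2$ conjugating $\phi$ to $\sigma$ (indeed an isometric conjugacy in the sense of \cite{FLWZ07}), and it is the \emph{full} $2$-shift exactly because both branches cover both balls. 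Finally, any $x\in\Qp\setminus\mathcal{J}$ has a first iterate leaving $B_+\cup B_-$; that iterate either already has norm $\ne p^{-m}$ or sits on $S\setminus(B_+\cup B_-)$, in which case the next norm is $p^{m}>p^{-m}$, so the first step forces $\phi^n(x)\to\infty$.

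The hard part will be the third step: locating the two balls $B_\pm$ and proving that each is mapped bijectively \emph{onto the same} ball $D$ containing both of them, with constant expansion $p^{2m}$. This ``both branches cover both balls'' property is precisely what upgrades the coding from a proper subshift of finite type to the full $2$-shift, and it is where the derivative computation $\phi'(x)=a-1/x^2$ together with the unit-derivative behaviour of $y\mapsto uy^2+1$ must be used carefully; the remaining bookkeeping (convergence to $\infty$ off $\mathcal{J}$, radii shrinking) is routine once this is in place.
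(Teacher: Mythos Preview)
Your proposal is correct and follows essentially the same route as the paper: the paper's Propositions~\ref{prop-1}--\ref{prop-2} and Lemmas~2--4 do exactly what you outline, locating the two balls $D(\pm 1/\sqrt{1-a},\,p^{v_p(a)/2-1})$ (your $B_\pm$, radius $p^{-(m+1)}$), proving each maps with uniform expansion $|a|_p$ onto the common ball $D(0,p^{-v_p(a)/2-1})$ (your $D$), invoking \cite{FLWZ07} for the full $2$-shift, and checking that every other point escapes to~$\infty$. The only cosmetic difference is that the paper verifies the expansion identity via the substitution $x=1/(\sqrt{1-a}+x')$ rather than your derivative/difference-quotient computation $\phi(x)-\phi(y)=(a-\tfrac{1}{xy})(x-y)$; both give $|a-\tfrac{1}{xy}|_p=|a|_p$ on $B_\pm$ and hence the needed bijective scaling.
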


\begin{theorem}\label{main-thm-2}
Let $\phi(x)=ax+\frac{1}{x}$  with $ a\in \Q_p\setminus \{0\}$.  
 If $|a|_p<1$,   we distinguish two cases.
 \begin{itemize}
 \item[(1)] If $\sqrt{-a}\notin \Q_p$, then  $\phi(0)=\phi(\infty)=\infty$ is a repelling fixed point and the subsystem $(\Q_p\setminus\{0\}, \phi)$ has a minimal decomposition as stated in Theorem \ref{thm2}.
 \item[(2)] If $\sqrt{-a}\in \Q_p$, then  the dynamical system $(\P,\phi)$ has a subsystem which is conjugate to a subshift of finite type with positive entropy.
 \end{itemize}
 
\end{theorem}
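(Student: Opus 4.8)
The plan is to treat the two cases separately after first extracting the common local picture at the two critical points $0$ and $\infty$. Writing $\phi(x)=(ax^2+1)/x$ and homogenizing, one checks directly that $\phi(0)=\phi(\infty)=\infty$, and that in the coordinate $u=1/x$ the map becomes $u\mapsto u/(u^2+a)$, whose derivative at $u=0$ equals $1/a$; since $|a|_p<1$ this multiplier has absolute value $p^{v_p(a)}>1$, so $\infty$ is a repelling fixed point. Reducing mod $p$ (where $a\equiv 0$) collapses $\phi$ to the degree-one involution $\bar x\mapsto 1/\bar x$ on $\PF$, so $\phi$ has bad reduction, and the degree drop is concentrated on the critical sphere $|x|_p=p^{k/2}$, $k:=v_p(a)$, where the two terms $ax$ and $1/x$ have equal absolute value. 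The whole dichotomy is governed by the equation $\phi(x)=0$, i.e.\ $ax^2+1=0$, whose roots $\pm 1/\sqrt{-a}$ lie in $\Q_p$ precisely when $\sqrt{-a}\in\Q_p$.

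\emph{Case (1): $\sqrt{-a}\notin\Q_p$.} Here $ax^2+1=0$ has no root in $\Q_p$, so $\phi(x)\neq 0$ for every $x\in\Q_p\setminus\{0\}$; together with the fact that $\phi(x)=\infty$ only for $x=0$, this makes $\Q_p\setminus\{0\}$ forward invariant, so that $(\Q_p\setminus\{0\},\phi)$ is a bona fide subsystem. I would first track the valuation $w=v_p(x)$: one finds $v_p(\phi(x))=-w$ for $w>-k/2$ and $v_p(\phi(x))=k+w$ for $w<-k/2$, whence every orbit enters in finitely many steps the compact clopen annulus $A=\{x:p^{-k/2}\le|x|_p\le p^{k/2}\}$, a finite union of balls with $\phi(A)\subseteq A$, the assumption $\sqrt{-a}\notin\Q_p$ forbidding the cancellation $ax^2+1\equiv0$ on the critical sphere that would otherwise let an orbit escape toward $0$ or $\infty$. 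The decisive estimate is $|\phi'(x)|_p=|a-1/x^2|_p=1/|x|_p^2$ off that sphere, so $\phi$ is an isometry on the unit sphere and, along each period-two valuation cycle $\{w,-w\}$, the two multipliers have reciprocal absolute values; hence $\phi^2$ is $1$-Lipschitz on $A$, isometric except possibly on the critical sphere where it can only contract. I would then apply the minimal decomposition theory for $1$-Lipschitz systems that underlies Theorem \ref{thm2} to $\phi^2$ on the finite union of balls $A$, pass back to $\phi$ using the odd symmetry $\phi(-x)=-\phi(x)$, and extend to all of $\Q_p\setminus\{0\}$ by assigning the transient orbits feeding into $A$ to the basin part $\mathcal B$. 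The delicate point here is the critical sphere: I must confirm that without a root of $-a$ the valuation cannot jump there, so that $A$ stays invariant and $\phi^2$ stays non-expanding (any residual contraction merely producing harmless attracting periodic orbits), and I must phrase the $1$-Lipschitz structure theorem for a map living on a finite union of balls rather than on all of $\P$.

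\emph{Case (2): $\sqrt{-a}\in\Q_p$.} Now $k=v_p(a)$ is necessarily even, the two points $\pm c$ with $c=1/\sqrt{-a}$, $|c|_p=p^{k/2}$, satisfy $\phi(\pm c)=0$, and we obtain the chain $\pm c\to 0\to\infty$ landing on the repelling fixed point. On the critical sphere the cancellation $ax^2+1\equiv0$ now does occur near $\pm c$: this reinstates the degree-two branching lost in the reduction and is exactly what creates local expansion. The plan is to produce a $p$-adic repeller in the sense of \cite{FLWZ07}, namely a clopen set $X$ that is a finite union of balls on which $\phi$ is expanding and which satisfies a Markov covering condition among its balls. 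By the conjugacy theorem of \cite{FLWZ07}, the maximal invariant set $\Lambda=\bigcap_{n\ge0}\phi^{-n}(X)$ is then isometrically, hence topologically, conjugate to the subshift of finite type given by the incidence matrix of the covering. Using the two distinct preimages $\pm c$ of $0$, I would arrange that this matrix has a strongly connected part containing a vertex of out-degree at least two, so that its spectral radius exceeds $1$ and the subshift has positive entropy. The main obstacle is the explicit construction of $X$ and of its incidence matrix---locating the balls on which $\phi$ truly expands, following their images through the oscillation of the valuation and the cancellation on the critical sphere, and verifying the expansion and covering hypotheses of \cite{FLWZ07}---after which the positive entropy follows from the out-degree-two vertex.
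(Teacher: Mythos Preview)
Your outline is correct and tracks the paper's argument closely: in Case~(1) the paper also shows that every orbit enters the annulus $\{p^{-\lfloor k/2\rfloor}\le|x|_p\le p^{\lfloor k/2\rfloor}\}$, that $\phi$ sends $S(0,p^i)$ into $S(0,p^{-i})$, and that $\phi^2$ is $1$-Lipschitz there; in Case~(2) it also builds a $p$-adic repeller \`a la \cite{FLWZ07}. The one point where you are vaguer than the paper is the passage from ``$\phi^2$ is $1$-Lipschitz on a finite union of balls'' to a minimal decomposition: there is no general decomposition theorem for $1$-Lipschitz maps, and Theorem~\ref{thm2} (good reduction on $\P$) does not apply here. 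The paper handles this by proving that on each disk $D(x_0,|x_0|_p/p)$ in the annulus the Taylor coefficients of $\phi^2$ satisfy $|\alpha_j|_p\le (p/|x_0|_p)^{j-1}$ (using that the poles $0,\pm 1/\sqrt{-a}$ stay outside the disk in $\mathbb{C}_p$), so that after following an induced cycle of disks and an affine rescaling, the return map $\phi^{2k}$ becomes a convergent power series with coefficients in $\Z_p$, to which the decomposition theorem of \cite{FLpre} applies. For Case~(2), the explicit Markov partition is $D_1=D(1/\sqrt{-a},1/p)$, $D_2=D(-1/\sqrt{-a},1/p)$, $D_3=D(0,|a|_p/p)$, $D_4=\P\setminus D(0,|a|_p)$ (after a M\"obius conjugacy moving $\infty$ into $\Z_p$), with transition matrix
\[
A=\begin{pmatrix}0&0&1&0\\0&0&1&0\\0&0&0&1\\1&1&0&1\end{pmatrix},
\]
whose spectral radius $\approx 1.696$ yields positive entropy; the out-degree-two vertex you anticipated is $D_4$.
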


\medskip
\section{preliminaries}
Let $p\geq 2$ be a prime number.
Any nonzero rational number $r\in \mathbb{Q}$ can be written as
$r =p^v \frac{a}{b}$ where $v, a, b\in \mathbb{Z}$ and $a$, $b$ are not divisible by $p$. 
 Define $v_p(r)=v$ and
$|r|_p = p^{-v_p(r)}$ for $r\not=0$ and $|0|_p=0$.
Then $|\cdot|_p$ is a non-Archimedean absolute value on $\Q$. That means\\
\indent (i)  \ \ $|r|_p\ge 0$ with equality only for $r=0$; \\
\indent (ii) \ $|r s|_p=|r|_p |s|_p$;\\
\indent (iii) $|r+s|_p\le \max\{ |r|_p, |s|_p\}$.\\
The field $\mathbb{Q}_p$ of $p$-adic numbers is the completion of $\mathbb{Q}$ under the absolute value 
$|\cdot|_p$. 
Actually, any $x\in \Qp$ can be written as
$$
   x = \sum_{n=v_p(x)}^\infty a_n p^n    \quad (v_p(x) \in \mathbb{Z}, a_n \in\{0, 1, 2, \cdots, p-1\} \hbox{ and  }a_{v_p(x)}\neq 0).
$$
Here, the integer $v_p(x)$ is called the {\em $p$-valuation} of $x$.

 Any point in the projective line $\P$ of $\Q_p$ can be given in homogeneous coordinates by a pair
$[x_1 : x_2]$
of points in $\Qp$ which are not both zero. Two such pairs are equal if they differ by an overall (nonzero) factor $\lambda \in \Q_p^*$:
$$[x_1 : x_2] = [\lambda x_1 : \lambda x_2].$$
The field $\Qp$ is identified with the subset of $\P$ given by
$$\left\{[x : 1] \in \mathbb P^1(\Qp) \mid x \in \Qp\right\}.$$
This subset contains  all points  in $\P$ except one: the point  of infinity, which may be given as
$\infty = [1 : 0].$

The spherical metric defined on $\P$ is analogous to the standard spherical metric on the
Riemann sphere. If $P=[x_1,y_1]$ and $Q=[x_2,y_2]$ are two points in $\P$, we define
 $$\rho(P,Q)=\frac{|x_1y_2-x_2y_1|_p}{\max\{|x_1|_{p},|y_1|_{p}\}\max\{|x_2|_{p},|y_{2}|_{p}\}}$$
 or, viewing $\mathbb{P}^{1}(\Qp)$ as $\Qp\cup\{\infty\}$, for $z_1,z_2 \in \Qp\cup \{\infty\}$
 we define
 $$\rho(z_1,z_2)=\frac{|z_1-z_2|_{p}}{\max\{|z_1|_{p},1\}\max\{|z_2|_{p},1\}}  \qquad\mbox{if~}z_{1},z_{2}\in \Qp,$$
 and
 $$\rho(z,\infty)=\left\{
                    \begin{array}{ll}
                      1, & \mbox{if $|z|_{p}\leq 1$,} \\
                      1/|z|_{p}, & \mbox{if $|z|_{p}> 1$.}
                    \end{array}
                  \right.
 $$

Remark that the restriction of the spherical metric on the ring $\Zp:=\{x\in \Q_p, |x|\leq 1\}$ of $p$-adic integers is the same as the metric induced by the absolute value $|\cdot|_p$.

A rational map $\phi \in \Qp(z)$ induces a transformation  on  $\P$.
Rational maps  are  always Lipschitz continuous on $\P$
with respect to the spherical metric (see \cite[Theorem 2.14.]{SilvermanGTM241}).

In $\Qp$, we denote by $D(a,r):=\{x\in \Q_p: |x|_p\leq r\}$ the closed disk centered at $a$ with radius $r$ and by $S(a,r):=\{x\in \Q_p: |x|_p=r\}$ its corresponding sphere. A closed disk in $\P$ is either a closed disk in $\Qp$ or the complement of an open disk in $\Qp$. 

We recall some standard terminology of the theory of dynamical systems. If $\phi(x_0)= x_0$ then $x_0$ is called a fixed point of $\phi$. The set of all fixed points of $f$ is denoted by ${\rm Fix}(f).$  An important role in iteration theory is played by the periodic points.   By definition, $x_0$ is called a \emph{periodic point }of $\phi$ if $\phi^{n}(x_0)=x_0$ for some $n\geq 1$. In this case, $n$ is called a \emph{period} of $x_0$, and the smallest $n$ with this property is called the \emph{exact period} of $x_0$.

For a  periodic  point $x_0\in \Q_p$ of exact period $n$, $(\phi^{n})^{\prime}(x_0)$ is called the multiplier of $x_0$. Remark that the  multiplier is invariant by changing of coordinate.  If $\infty$ is a periodic point of period $n$, then the multiplier of $\infty$ is $\psi^{\prime}(0)$, where $\psi(x)=\frac{1}{\phi^n(1/x)}$.
A  periodic point is called \emph{attracting, indifferent,} or \emph{repelling} accordingly as the absolute value of its multiplier is less than, equal to, or greater than $1$. Periodic points of multiplier $0$ are called \emph{super attracting}.

A subsystem of a dynamical system is {\it minimal} if the orbit of any point in the subspace is dense in the subspace.

Now we  recall the conditions under which a number in $\Q_p$ has a square root in $\Q_p$.

An integer $a\in \mathbb{Z}$ is called a\emph{ quadratic residue modulo} $p$ if the equation $x^{2}\equiv a ~(\!\!\!\!\mod p)$ has a solution
$x\in \mathbb{Z}$. The following lemma characterizes those $p$-adic integers which admit a square root in $\Qp$.
\begin{lemma}[\cite{Mahler81}]\label{solution} Let $a$ be a nonzero $p$-adic number with its p-adic expansion
$$ a=p^{v_{p}(a)}(a_0+a_{1}p+a_{2}p^{2}+\cdots)$$
where $1\le a_0\le p-1$ and $0\leq a_j \leq p-1 \ (j\geq 1)$. The equation
$x^2=a$ has a solution $x\in \Qp$ if and only if
 the following conditions are satisfied
\, \indent \ \
  \item[{\rm (i)}] $v_{p}(a)$ is even;
 \, \indent \ \
  \item[{\rm (ii)}] $a_0$ is quadratic residue modulo $p$ if $p\neq 2$; or $a_1=a_2=0$ if $p=2$.
\end{lemma}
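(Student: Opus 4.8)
The plan is to prove Lemma~\ref{solution}, the standard square-root criterion over $\Qp$, by first reducing to the case of a $p$-adic \emph{unit} and then invoking Hensel's lemma. Writing $a = p^{v}u$ with $v = v_p(a)$ and $u = a_0 + a_1 p + a_2 p^2 + \cdots \in \Zp^{\times}$ (a unit, since $a_0 \neq 0$), I would first observe that any solution $x$ of $x^2 = a$ forces $2 v_p(x) = v_p(a) = v$, so $v$ is necessarily even; this is exactly the necessity of condition~(i). Conversely, once $v = 2k$ is even, the substitution $x = p^{k} y$ turns $x^2 = a$ into $y^2 = u$. Thus the whole problem reduces to deciding when a unit $u \in \Zp^{\times}$ is a square in $\Zp$, and the remaining content of condition~(ii) becomes a statement purely about units.

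For the unit case I would apply Hensel's lemma to $f(X) = X^2 - u$, with derivative $f'(X) = 2X$. When $p \neq 2$ the argument is clean. If $y^2 = u$, reducing modulo $p$ gives $\bar y^{\,2} \equiv a_0$, so $a_0$ must be a quadratic residue, which is the necessity of~(ii). Conversely, if $a_0$ is a quadratic residue there is $y_0 \in \{1,\dots,p-1\}$ with $y_0^2 \equiv a_0 \ (\mathrm{mod}\ p)$; since $u \equiv a_0 \ (\mathrm{mod}\ p)$ this gives $|f(y_0)|_p \leq 1/p$, while $p$ odd and $y_0 \not\equiv 0$ yield $|f'(y_0)|_p = 1$. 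The condition $|f(y_0)|_p < |f'(y_0)|_p^{2} = 1$ holds, so the simple-root form of Hensel's lemma lifts $y_0$ to a genuine root in $\Zp$. This settles both directions for odd $p$.

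The delicate case, and the one I expect to be the main obstacle, is $p = 2$. Here $a_0 = 1$ is forced, so reduction modulo $2$ carries no information and $f'(y) = 2y$ is \emph{not} a unit: the naive Hensel condition fails because a square root of $u$ is never a simple root of $f$ modulo $2$. The correct tool is the refined (strong) form of Hensel's lemma, which lifts an approximate root $y_0$ as soon as $|f(y_0)|_2 < |f'(y_0)|_2^{2}$. Taking $y_0 = 1$ gives $f'(1) = 2$, hence $|f'(1)|_2^{2} = 1/4$, while $f(1) = 1 - u$. The hypothesis $a_1 = a_2 = 0$ says precisely that $u \equiv 1 \ (\mathrm{mod}\ 8)$, i.e. $|1 - u|_2 \leq 1/8 < 1/4$, so the refined Hensel's lemma applies and produces a square root. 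For necessity I would record the elementary fact that every odd square is congruent to $1$ modulo $8$: writing $y = 1 + 2m$ gives $y^2 = 1 + 4 m(m+1) \equiv 1 \ (\mathrm{mod}\ 8)$ because $m(m+1)$ is even, which forces $a_1 = a_2 = 0$. Assembling the three pieces—the valuation-parity reduction giving~(i), the odd-$p$ Hensel argument, and the $p = 2$ strong-Hensel argument giving~(ii)—yields the stated equivalence.
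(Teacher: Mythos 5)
Your proof is correct in all three pieces: the valuation-parity reduction to the unit case, the simple-root Hensel argument for odd $p$ (where $f'(y_0)=2y_0$ is a unit, so $|f(y_0)|_p\le 1/p<1=|f'(y_0)|_p^2$), and the strong-Hensel argument at $p=2$ with $y_0=1$, where $a_1=a_2=0$ gives exactly $u\equiv 1\ ({\rm mod}\ 8)$, i.e.\ $|1-u|_2\le 1/8<1/4=|f'(1)|_2^2$, together with the necessity check that every odd square is $\equiv 1\ ({\rm mod}\ 8)$. There is, however, nothing in the paper to compare against: the authors state this lemma with a citation to Mahler's book and give no proof, treating it as a standard fact (indeed, in the body of the paper only the case $p\ge 3$ is ever used, though the lemma is stated for all $p$). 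Your argument is the standard textbook one, and it is complete; the only stylistic point worth noting is that you correctly identified where the naive Hensel hypothesis fails at $p=2$ and invoked the refined form $|f(y_0)|_p<|f'(y_0)|_p^2$, which is precisely the step that a careless write-up would fumble. So your proposal supplies a self-contained proof of a result the paper merely quotes.
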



\medskip
\section{Dynamical structures}

Now, let $p\geq 3$. We will study the dynamical structure of the rational maps $\phi(x)=ax+\frac{1}{x}$ with $a\in \Q_p\setminus \{0\}$ on the projective line $\P$.  
In general, we can also consider the rational maps 
\[
ax+{b \over x}, \quad \text{with} \ a, b\in \Q_p.
\]
Remark that if $\sqrt{b}$ exists in $\Q_p$, then $ax+{b \over x}$ is conjugate to $ax+{1 \over x}$ through the conjugacy $x\mapsto {1 \over \sqrt{b}}x$. The case that $\sqrt{b}$ does not exist in $\Q_p$ which is not included in the present paper could be a subject for future study.

The dynamical system $(\P, \phi)$ exhibits different dynamical structures according to different absolute values of $a$. When $|a|_p=1$, the transformation $\phi$ has good reduction. The dynamical structure of $(\P, \phi)$, as shown in Theorem \ref{thm2}, can be deduced directly from Theorem 1.2 of \cite{FFLW2016}. 

We are thus concerned only with the cases:
$|a|_p>1$ and $|a|_p<1$. We remark that for both cases, $\infty$ is a fixed point of $\phi$.

\subsection{Case $|a|_p>1$}
\begin{proposition}\label{prop-1}
Suppose $|a|_p>1$. If $\sqrt{1-a}\notin\Q_p$, then \[\forall x\in \Q_p, \quad \lim_{n\to\infty} \phi^n(x)=\infty.\]
\end{proposition}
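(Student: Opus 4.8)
The plan is to track the $p$-adic valuation $v_p(\phi^n(x))$ and show it tends to $-\infty$, which is exactly the statement $\phi^n(x)\to\infty$ in the spherical metric. Write $v_p(a)=-k$ with $k\geq 1$ and $a=p^{-k}w$ for a unit $w$. For $x\neq 0$ with $v_p(x)=v$, the two competing terms have valuations $v_p(ax)=v-k$ and $v_p(1/x)=-v$, so the ``balance point'' is $v=k/2$. This splits $\Qp\setminus\{0\}$ into three regions, and the critical sphere $v_p(x)=k/2$ is where the two terms have equal absolute value and cancellation can occur. I note first that $\phi(0)=\infty$ and $\phi(\infty)=\infty$, so the point $x=0$ escapes immediately.

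First I would treat the two off-critical regions by routine non-Archimedean bookkeeping. If $v<k/2$, then $v_p(ax)<v_p(1/x)$, so $v_p(\phi(x))=v-k$; since $v-k<k/2$, the orbit stays in this region and its valuation decreases by $k$ at each step, whence $\phi^n(x)\to\infty$. If $v>k/2$, then $v_p(1/x)<v_p(ax)$, so $v_p(\phi(x))=-v<-k/2<k/2$, placing the next iterate in the first region. Thus every orbit that starts off, or ever leaves, the critical sphere escapes to infinity.

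The main obstacle, and the only place the hypothesis $\sqrt{1-a}\notin\Qp$ enters, is the critical sphere $v_p(x)=k/2$, which is nonempty only when $k$ is even. Writing $x=p^{k/2}u$ with $u$ a unit, a direct computation gives $\phi(x)=p^{-k/2}\,\dfrac{wu^2+1}{u}$, hence $v_p(\phi(x))=-\tfrac{k}{2}+v_p(wu^2+1)$. The iterate leaves the critical sphere (into the first region) precisely when $wu^2+1$ is a unit, and I would show this always holds under the hypothesis. Indeed, if $wu^2+1\equiv 0 \pmod p$ for some unit $u$, then $u^2\equiv -w^{-1}\pmod p$, forcing $-w^{-1}$, and therefore $-w$, to be a quadratic residue modulo $p$. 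But $1-a=p^{-k}(p^{k}-w)$ has even valuation $-k$ and unit part congruent to $-w\pmod p$, so Lemma \ref{solution} would then give $\sqrt{1-a}\in\Qp$, a contradiction. Hence $wu^2+1$ is a unit, $v_p(\phi(x))=-k/2<k/2$, and the point leaves the critical sphere on the very first step.

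Combining the three regions, every $x\in\Qp$ enters the region $v<k/2$ after at most one iterate and thereafter has valuation decreasing to $-\infty$, giving $\lim_{n\to\infty}\phi^n(x)=\infty$. I expect the critical-sphere analysis to be the crux: the off-critical estimates are straightforward valuation comparisons, whereas the substantive point is the emergence of the quadratic-residue condition on $-w$ and its identification, via Lemma \ref{solution}, with the solvability of $\sqrt{1-a}$ in $\Qp$.
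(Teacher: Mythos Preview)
Your argument is correct and follows essentially the same route as the paper: a valuation comparison splitting into off-critical regions (where the ultrametric inequality is strict and the orbit is forced outward) and the critical sphere $v_p(x)=k/2$, where the hypothesis $\sqrt{1-a}\notin\Qp$ is invoked exactly as in the paper via the equivalence with $-w$ not being a quadratic residue mod $p$. Your three-way split $v<k/2$, $v=k/2$, $v>k/2$ is a slightly cleaner packaging than the paper's two-stage decomposition (first $|x|_p\geq 1$, then $A_1$, $A_2$), but the substance is identical.
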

\begin{proof}
By the assumption $|a|_p>1$, for all $x\in \Q_p$ such that $|x|_p\geq 1$, we have 
\[|\phi(x)|_p=\left|ax+{1\over x}\right|_p=|ax|_p>|x|_p.\]
Thus the absolute values of the iterations $\phi^n(x)$ are strictly increasing. Hence
\begin{align}\label{attr-basin}
\lim_{n\to\infty} \phi^n(x)=\infty, \quad \text{for all } x\in \Q_p, |x|_p\geq 1.
\end{align}
That is to say, $\{x\in \Q_p: |x|_p\geq 1\}$ is included in the attracting basin of $\infty$.

Now we investigate the points in the open disk $\{x\in \Q_p: |x|_p<1\}$. We partition this disk into two: $$A_1:=\left\{x\in \Q_p :|x|_p<1, |ax|_p\neq {1\over  |x|_p}\right\}, \ A_2:=\left\{x\in \Q_p : |x|_p<1, |ax|_p={1\over  |x|_p}\right\}.$$  

If $x\in A_1$, then 
\[
|\phi(x)|_p=\max\left\{|ax|_p, {1 \over |x|_p}\right\} >1.
\]
Thus by (\ref{attr-basin}), $\phi(x)$ falls into the attracting basin of $\infty$, and $\lim\limits_{n\to\infty} \phi^n(x)=\infty$.

If $|ax|_p={1\over  |x|_p}$, then $|a|_p={1\over |x|_p^2}$, which means that $v_p(a)$ is an even number. Since $|a|_p>1$, the condition $\sqrt{1-a}\not\in \Q_p$ is equivalent to that $\sqrt{-a}$ does not exist in $\Q_p$. Hence the equation $\phi(x)=0$ has no solution in $\Q_p$. Since $p\geq 3$, this is also equivalent to that the first digits of $ax$ and ${1\over x}$ in their $p$-adic expansions can not be canceled. Thus $|\phi(x)|_p=|ax|_p$ or $|{1/ x}|_p$ and hence strictly larger than $1$. Therefore, by (\ref{attr-basin}), $\lim\limits_{n\to\infty} \phi^n(x)=\infty$.

\end{proof}

\begin{lemma}Suppose $|a|_p>1$. If $\sqrt{1-a}\in\Q_p$, then $\phi$ has two repelling fixed points $$x_{1,2}=\pm\frac{1}{\sqrt{1-a}}.$$
\end{lemma}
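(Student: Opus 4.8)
The plan is to find the finite fixed points explicitly and then evaluate the multiplier at each of them. First I would solve $\phi(x)=x$ for $x\in\Qp$: writing $ax+\frac1x=x$ and clearing the denominator gives $(a-1)x^2+1=0$, that is $x^2=\frac{1}{1-a}$. (The point $\infty$ is also fixed, with multiplier $1/a$ of absolute value $<1$, but it is treated separately and plays no role here.) Hence $\phi$ has a finite fixed point in $\Qp$ precisely when $\frac{1}{1-a}$ is a square in $\Qp$.

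Next I would reconcile this with the hypothesis $\sqrt{1-a}\in\Qp$. Since $|a|_p>1>|1|_p$, the ultrametric inequality gives $|1-a|_p=|a|_p$, and a nonzero element is a square in $\Qp$ if and only if its reciprocal is; therefore $\frac{1}{1-a}$ is a square exactly when $1-a$ is. Under the assumption $\sqrt{1-a}\in\Qp$ the equation $x^2=\frac{1}{1-a}$ thus has exactly the two solutions $x_{1,2}=\pm\frac{1}{\sqrt{1-a}}$, which are the two finite fixed points claimed.

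It remains to compute the multiplier. Differentiating, $\phi'(x)=a-\frac{1}{x^2}$, and at either fixed point one has $\frac{1}{x_0^2}=1-a$, so $\phi'(x_{1,2})=a-(1-a)=2a-1$; in particular the multiplier is the same at both points. Finally I would estimate its absolute value: because $p\ge 3$ we have $|2|_p=1$, so $|2a|_p=|a|_p>1=|1|_p$, and the strong triangle inequality yields $|2a-1|_p=|a|_p>1$. Hence both fixed points are repelling.

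The argument is a direct computation rather than a deep one, so there is no serious obstacle; the only steps requiring a little care are the equivalence between $1-a$ and $\frac{1}{1-a}$ being squares (so that the fixed points can legitimately be written in terms of $\sqrt{1-a}$ as stated) and the use of the hypothesis $p\ge 3$ to guarantee that $2$ is a unit when evaluating $|2a-1|_p$.
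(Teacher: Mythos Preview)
Your proof is correct and follows the same direct computation as the paper: verify that $x_{1,2}=\pm 1/\sqrt{1-a}$ solve $\phi(x)=x$, compute $\phi'(x_{1,2})=2a-1$, and conclude $|2a-1|_p>1$. Your version is in fact more careful than the paper's, since you spell out why $1-a$ being a square is equivalent to $1/(1-a)$ being one, and you make explicit the role of $p\ge 3$ in obtaining $|2a-1|_p=|a|_p$.
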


\begin{proof}
It is easy to check that $x_{1,2}=\pm\frac{1}{\sqrt{1-a}}$ are the two fixed points of $\phi$.
Note that  
$$\phi^{\prime}(x_{1})=\phi^{\prime}(x_{2})=2a-1.$$ 
Since $|a|_p>1$,  we have 
$$|\phi^{\prime}(x_{1})|_p=|\phi^{\prime}(x_{2})|_p=|2a-1|_p>1.$$
\end{proof}

\begin{lemma}\label{scaling}
Suppose $|a|_p>1$. If $\sqrt{1-a}\in\Q_p$,  then
$$\phi(D(x_1,p^{\frac{v_p(a)}{2}-1}))=\phi(D(x_2,p^{\frac{v_p(a)}{2}-1}))=D(0, p^{-\frac{v_p(a)}{2}-1}).$$
\end{lemma}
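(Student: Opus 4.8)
The plan is to exploit the oddness of $\phi$ to reduce the two claimed equalities to a single one, and then to analyse $\phi$ near the fixed point $x_1$ through its local power series expansion. Since $\phi(-x)=-\phi(x)$ and $x_2=-x_1$, one has $D(x_2,p^{m-1})=-D(x_1,p^{m-1})$ and hence $\phi(D(x_2,p^{m-1}))=-\phi(D(x_1,p^{m-1}))$, where I abbreviate $m:=v_p(a)/2$; as $D(0,p^{-m-1})$ is symmetric about $0$, it suffices to prove $\phi(D(x_1,p^{m-1}))=D(0,p^{-m-1})$. First I would record the relevant valuations. The hypothesis $\sqrt{1-a}\in\Q_p$ together with Lemma \ref{solution} forces $v_p(1-a)=v_p(a)$ to be even (here $|1-a|_p=|a|_p$ because $|a|_p>1$), so $m$ is an integer with $m\le -1$; moreover $|a|_p=p^{-2m}$ and, since $x_1=1/\sqrt{1-a}$, one has $|x_1|_p=p^{m}$. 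As the radius $p^{m-1}$ is strictly smaller than $|x_1|_p=p^m$, every $x\in D(x_1,p^{m-1})$ satisfies $|x|_p=p^m$.

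Next I would expand $\phi$ at $x_1$. Writing $x=x_1+h$ with $|h|_p\le p^{m-1}$, the bound $|h/x_1|_p\le p^{-1}<1$ makes the geometric series converge, and
\begin{align*}
\phi(x_1+h)-x_1=(2a-1)\,h+\sum_{k\ge 2}\frac{(-1)^k}{x_1^{k+1}}\,h^k,
\end{align*}
where $2a-1=\phi'(x_1)$ is the multiplier from the preceding lemma. Using $p\ge 3$ one gets $|2a-1|_p=|a|_p=p^{-2m}$ for the linear coefficient, while the higher coefficients satisfy $|(-1)^k x_1^{-(k+1)}|_p=p^{-m(k+1)}$.

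The decisive step is a domination estimate at the boundary radius $r=p^{m-1}$: the linear term contributes $|2a-1|_p\,r=p^{-m-1}$, whereas the $k$-th term contributes $p^{-m(k+1)}r^{k}=p^{-m-k}$, which is strictly smaller for every $k\ge 2$. Consequently $|\phi(x)-x_1|_p=|a|_p\,|h|_p=p^{-2m}|h|_p$, so as $h$ runs over $D(0,p^{m-1})$ the image lies in $D(x_1,p^{-m-1})$; and since $|x_1|_p=p^m\le p^{-m-1}$ (because $m\le -1$), the ball $D(x_1,p^{-m-1})$ coincides with $D(0,p^{-m-1})$. To upgrade this inclusion to an equality I would invoke the standard fact that a $p$-adic power series with strictly dominant linear term maps $D(0,r)$ bijectively onto $D(0,|c_1|_p r)$; applied here it yields $\phi(D(x_1,p^{m-1}))=D(0,p^{-m-1})$ exactly.

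The main obstacle is precisely this passage from inclusion to equality: the domination estimate immediately gives the inclusion and the exact scaling $|\phi(x)-x_1|_p=p^{-2m}|x-x_1|_p$, but surjectivity onto the full target ball requires the bijectivity of linear-dominant power series (equivalently a Hensel-type inversion). The point to check carefully is that the strict domination already holds at the boundary radius $r=p^{m-1}$, which guarantees both that $|\phi'|_p$ is the constant $|a|_p$ on the whole disk (so no cancellation and no critical point occurs there) and that the inversion lemma applies.
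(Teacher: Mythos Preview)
Your argument is correct and arrives at the same scaling identity $|\phi(x)-\phi(y)|_p=|a|_p\,|x-y|_p$ on $D(x_1,p^{m-1})$ as the paper, but by a genuinely different route. The paper does not expand $\phi$ as a power series about $x_1$; instead it uses the algebraic factorization
\[
\phi(x)-\phi(y)=\Bigl(a-\tfrac{1}{xy}\Bigr)(x-y)
\]
and parametrises the disk via the reciprocal substitution $x=1/(\sqrt{1-a}+x')$ with $x'\in D(0,p^{-m-1})$, from which $|a-1/(xy)|_p=|2a-1-(x'+y')\sqrt{1-a}-x'y'|_p=|a|_p$ drops out at once. Your Taylor-expansion approach is more systematic and makes the strict domination of the linear term completely transparent; it also has the virtue of addressing surjectivity onto the target disk explicitly via the bijectivity lemma for linear-dominant series, a point the paper leaves tacit once the constant scaling factor is obtained. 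The paper's approach, on the other hand, avoids the series manipulation entirely and gives the two-variable scaling in one stroke without ever computing individual Taylor coefficients. Both reductions from $x_2$ to $x_1$ are essentially the same (the paper just writes ``without loss of generality'' where you invoke oddness), and your valuation bookkeeping and domination estimate at the boundary radius $r=p^{m-1}$ are accurate.
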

\begin{proof}
Since $|a|_p>1$ and $\sqrt{1-a}\in\Q_p$, the valuation $v_p(a)$ is a negative even number. Note that 
 $$|\phi(x_1)|_p=|\phi(x_2)|_p=|x_1|_p=|x_2|_p=p^{\frac{v_p(a)}{2}}.$$
We need only to show that for all $x,y$ in the same disk $D(x_1,p^{\frac{v_p(a)}{2}-1})$ or $D(x_2,p^{\frac{v_p(a)}{2}-1})$,
$$|\phi(x)-\phi(y)|_p=p^{-{v_p(a)}}|x-y|_p.$$
Without loss of generality, we assume that    $x,y \in D(x_1,p^{\frac{v_p(a)}{2}-1})$.
By the definition of the spherical metric on $\P$, $$ D(x_1,p^{\frac{v_p(a)}{2}-1})=\left\{\frac{1}{x}: |x-\sqrt{1-a}|_p \leq p^{-\frac{v_p(a)}{2}-1}\right\}.$$
Hence, there are $x^{\prime},y^{\prime}\in D(0,p^{-\frac{v_p(a)}{2}-1})$  such that 
$$x=\frac{1}{\sqrt{1-a}+x^\prime} \ \hbox{ and } \ y=\frac{1}{\sqrt{1-a}+y^\prime}.$$
So  we have 
$$\left|a-\frac{1}{xy}\right|_p=|2a-1-(x^{\prime}+y^{\prime})\sqrt{1-a}-x^{\prime}y^{\prime}|_p.$$
Observing that $|a|_p\geq 1$ and  $|(x^{\prime}+y^{\prime})\sqrt{1-a}|_p \leq |a|_p/p$, we have  
$$\left|a-\frac{1}{xy}\right|_p=|a|_p.$$
Hence, 
$$|\phi(x)-\phi(y)|_p=\left|(a-\frac{1}{xy})(x-y)\right|_p=|a|_p|x-y|_p=p^{-{v_p(a)}}|x-y|_p.$$

\end{proof}

\begin{lemma}\label{goinfty}Suppose $|a|_p>1$. If $\sqrt{1-a}\in\Q_p$, then for all $$x\notin D(x_1,p^{\frac{v_p(a)}{2}-1})\cup D(x_2,p^{\frac{v_p(a)}{2}-1}),$$
we have
 \[ \lim_{n\to\infty} \phi^n(x)=\infty.\]
\end{lemma}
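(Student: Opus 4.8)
The plan is to prove that every point lying outside the two residue disks $D(x_1,p^{\frac{v_p(a)}{2}-1})\cup D(x_2,p^{\frac{v_p(a)}{2}-1})$ is sent by a single application of $\phi$ into the region $\{x\in\Q_p:|x|_p\ge 1\}$, which by \eqref{attr-basin} is contained in the attracting basin of $\infty$. First I would recall from the proof of Proposition \ref{prop-1} that \eqref{attr-basin} was established using only the hypothesis $|a|_p>1$, so it holds here as well; consequently it suffices to treat points of the open unit disk $\{|x|_p<1\}$ and to exhibit one iterate of absolute value strictly larger than $1$.

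Next I would split $\{|x|_p<1\}$ according to position relative to the critical sphere $S(0,p^{\frac{v_p(a)}{2}})$, which contains both fixed points and on which the two terms of $\phi$ have equal absolute value $|ax|_p=|1/x|_p=p^{-\frac{v_p(a)}{2}}$. Off this sphere there is no competition: if $|x|_p>p^{\frac{v_p(a)}{2}}$ then $|\phi(x)|_p=|ax|_p=|a|_p|x|_p>1$, while if $|x|_p<p^{\frac{v_p(a)}{2}}$ then $|\phi(x)|_p=1/|x|_p>1$. In either case $\phi(x)$ escapes. The entire substance of the lemma therefore lies on the critical sphere itself, and this is where the hypotheses $\sqrt{1-a}\in\Q_p$ and $p\ge 3$ must be used. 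Writing $\phi(x)=(ax^2+1)/x$ with $|ax^2|_p=1$, I would analyze the leading digit: for $x=p^{-v_p(a)/2}(c_0+c_1p+\cdots)$ with $c_0\neq 0$ and $a=p^{v_p(a)}(a_0+a_1p+\cdots)$, a cancellation $ax^2+1\equiv 0\ (\mathrm{mod}\ p)$ occurs precisely when $c_0^2\equiv -a_0^{-1}\ (\mathrm{mod}\ p)$. The key identification is that the two residue disks on which this cancellation occurs are exactly $D(x_1,p^{\frac{v_p(a)}{2}-1})$ and $D(x_2,p^{\frac{v_p(a)}{2}-1})$: since $\sqrt{1-a}\in\Q_p$, the element $1/(1-a)=x_1^2$ is a square whose leading digit is $-a_0^{-1}$, so the leading digits of $x_1,x_2=\pm 1/\sqrt{1-a}$ are the two distinct (here $p\ge 3$ enters) square roots of $-a_0^{-1}$ modulo $p$. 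Hence for any $x$ on the sphere outside these two disks one has $|ax^2+1|_p=1$, so $|\phi(x)|_p=p^{-v_p(a)/2}>1$ and $\phi(x)$ escapes.

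The step I expect to be the main obstacle is precisely this last leading-digit analysis, namely verifying that the residue disks on which $ax^2+1$ vanishes modulo $p$ coincide with the two disks carrying the fixed points. The decisive estimate is that $1/(1-a)$ and the zero locus $-1/a$ of $ax^2+1$ satisfy $|1/(1-a)+1/a|_p=|a(1-a)|_p^{-1}=|a|_p^{-2}\le p^{2v_p(a)}$, which is far smaller than the residue-disk radius $p^{\frac{v_p(a)}{2}-1}$ because $v_p(a)\le -2$; this forces the solutions of $\phi(x)=0$ and the fixed points into the same residue disks, so that no cancellation can occur in any of the remaining residue disks of the sphere. Once this identification is secured, the off-sphere cases together with the non-cancelling on-sphere case cover all $x\notin D(x_1,p^{\frac{v_p(a)}{2}-1})\cup D(x_2,p^{\frac{v_p(a)}{2}-1})$, and each such $x$ satisfies $|\phi(x)|_p>1$, whence $\lim_{n\to\infty}\phi^n(x)=\infty$ by \eqref{attr-basin}.
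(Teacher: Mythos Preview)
Your argument is correct and follows essentially the same three-case split as the paper: points with $|x|_p\ge 1$ escape by \eqref{attr-basin}, points in the open unit disk off the sphere $S(0,p^{v_p(a)/2})$ escape after one step by the strict inequality $|ax|_p\ne |1/x|_p$, and on the sphere one checks that the only residue disks where the leading terms of $ax$ and $1/x$ cancel are precisely $D(x_1,p^{v_p(a)/2-1})$ and $D(x_2,p^{v_p(a)/2-1})$. The paper simply asserts this last identification; your explicit leading-digit computation (showing that both $x_{1,2}^2=1/(1-a)$ and the zeros of $ax^2+1$ have leading coefficient $-a_0^{-1}\pmod p$, hence determine the same pair of residue disks since $p\ge 3$) supplies the missing justification and is a useful addition.
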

\begin{proof}
Note that for all $x\notin D(0,1/p)$, $|1/x|_p\leq1$. Thus 
$$|\phi(x)|_p=|a|_p|x|_p>1.$$
Hence $$|\phi^n(x)|_p=|a|^n_p|x|_p,$$ 
which implies 
\[\forall \ x\notin D(0,1), \quad \lim_{n\to\infty} \phi^n(x)=\infty.\]

To finish the proof, we will show  that for all
\[ 0\neq x\in D(0,1)\setminus (D(x_1,p^{\frac{v_p(a)}{2}-1})\cup D(x_2,p^{\frac{v_p(a)}{2}-1})), \]
we have $|\phi(x)|_p>1$.

We distinguish three cases.\\
Case (1) $|x|_p<1/\sqrt{|a|_p}$. Since  $|1/x|_p > |ax|_p$, we have  
$$|\phi(x)|_p=|1/x|_p>1.$$
Case (2) $|x|_p=1/\sqrt{|a|_p}$. Observe that  $|ax|_p=|1/x|_p=\sqrt{|a|_p}$.  The assumption $ x\notin D(x_1,p^{\frac{v_p(a)}{2}-1})\cup D(x_2,p^{\frac{v_p(a)}{2}-1})$ implies  that  the  sum of the first digits of the $p$-adic expansion of $ax$ and $1/x$ is not $0$ modulo $p$, which leads to 
$$|\phi(x)|_p=|ax+1/x|_p=|ax|_p=\sqrt{|a|_p}>1.$$
\\
Case (3) $1/\sqrt{|a|_p}<|x|_p<1$. Since  $ |ax|_p>|1/x|_p>1$, we have 
$$|\phi(x)|_p= |ax|_p>1.$$
 \end{proof}

Let $(\Sigma_{2}, \sigma)$ be the full shift of two symbols.
\begin{proposition}\label{prop-2}
Suppose $|a|_p>1$ and  $\sqrt{1-a}\in\Q_p$. Then there exists an invariant set $\mathcal{J}$ such that 
$(\mathcal{J},\phi)$ is topologically conjugate to $( \Sigma_{2},\sigma),$
and $$\lim_{n\to \infty}\phi^n(x)=\infty,\quad \forall x\in \Qp\setminus \mathcal{J}.$$
\end{proposition}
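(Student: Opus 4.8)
The plan is to realize $\phi$ as a $p$-adic repeller on the two disjoint disks furnished by Lemmas~\ref{scaling} and~\ref{goinfty} and to read off the symbolic dynamics. Write $B_i := D(x_i, p^{v_p(a)/2-1})$ for $i=1,2$ and $\lambda := |a|_p = p^{-v_p(a)} > 1$. By Lemma~\ref{scaling}, $\phi(B_1) = \phi(B_2) = D(0, p^{-v_p(a)/2-1}) =: D$, and on each $B_i$ the map satisfies $|\phi(x)-\phi(y)|_p = \lambda |x-y|_p$; in particular $\phi|_{B_i}$ is injective, so $\phi|_{B_i}\colon B_i \to D$ is a bijection. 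Since $p \geq 3$ one checks $|x_1 - x_2|_p = |x_1|_p = p^{v_p(a)/2}$, which exceeds the common radius $p^{v_p(a)/2-1}$, so $B_1 \cap B_2 = \emptyset$; comparing radii also gives $B_1 \cup B_2 \subset D$. Thus each branch $\phi|_{B_i}$ maps bijectively onto a disk containing \emph{both} $B_1$ and $B_2$, which is the classical configuration producing a full two-shift.

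Next I set $\mathcal{J} := \{x \in \Q_p : \phi^n(x) \in B_1 \cup B_2 \text{ for all } n \geq 0\}$. It is immediate that $\phi(\mathcal{J}) \subseteq \mathcal{J}$; conversely, each $y \in \mathcal{J} \subset D$ has exactly one $\phi$-preimage in $B_1$ and one in $B_2$, and since $\phi^n$ of either preimage equals $\phi^{n-1}(y) \in B_1 \cup B_2$ for $n \geq 1$, both preimages lie in $\mathcal{J}$. Hence $\phi\colon \mathcal{J} \to \mathcal{J}$ is (two-to-one) surjective and $\mathcal{J}$ is invariant. I then define the coding map $\pi\colon \mathcal{J} \to \Sigma_2$ by $\pi(x)_n = i$ whenever $\phi^n(x) \in B_i$, so that $\pi \circ \phi = \sigma \circ \pi$ by construction.

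The core of the argument is controlling the cylinders. For a finite word $w = (s_0,\dots,s_{n-1})$ put $[w] := \{x : \phi^j(x) \in B_{s_j},\ 0 \leq j < n\}$, and observe the recursion $[s_0 s_1 \cdots s_{n-1}] = (\phi|_{B_{s_0}})^{-1}\,[s_1 \cdots s_{n-1}]$, which is legitimate because $[s_1\cdots s_{n-1}] \subseteq B_{s_1} \subseteq D = \phi(B_{s_0})$. Since $\phi|_{B_{s_0}}$ is a similarity of ratio $\lambda$, and in the ultrametric setting such a map carries disks to disks and pulls disks back to disks with radius divided by $\lambda$, an induction shows that every $[w]$ is a disk of radius $p^{v_p(a)/2-1}\,\lambda^{-(n-1)}$. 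As $\lambda > 1$ these radii tend to $0$, so completeness of $\Q_p$ makes each nested intersection $\bigcap_n [s_0\cdots s_{n-1}]$ a single point, giving surjectivity of $\pi$; the shrinking diameters give injectivity of $\pi$ and continuity of $\pi^{-1}$, while points sharing a length-$n$ code are automatically $\pi$-close, giving continuity of $\pi$. Hence $\pi$ is a homeomorphism conjugating $(\mathcal{J},\phi)$ to $(\Sigma_2,\sigma)$.

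Finally, for $x \in \Q_p \setminus \mathcal{J}$ there is a least $n \geq 0$ with $y := \phi^n(x) \notin B_1 \cup B_2$ (should an earlier iterate equal $0$, the next iterate is the fixed point $\infty$ and we are done). Applying Lemma~\ref{goinfty} to $y$ gives $\phi^m(x) = \phi^{m-n}(y) \to \infty$. The main obstacle is the third step: one must verify rigorously, from the exact scaling in Lemma~\ref{scaling}, that the cylinders are genuine disks with geometrically shrinking radii. The ultrametric inequality is precisely what guarantees that the nonlinear $\phi$ behaves like an affine similarity on each branch and that preimages of disks are disks; once this is in place, completeness of $\Q_p$ supplies the point coded by each sequence, and the remaining verifications are routine bookkeeping.
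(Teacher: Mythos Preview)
Your proof is correct and the setup is identical to the paper's: the same two disks $B_i=D(x_i,p^{v_p(a)/2-1})$, the same target disk $D=D(0,p^{-v_p(a)/2-1})\supset B_1\cup B_2$, the same invariant set $\mathcal{J}$ of points whose forward orbit never leaves $B_1\cup B_2$, and the same appeal to Lemma~\ref{goinfty} for points outside $\mathcal{J}$.

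The only real difference is in how the conjugacy to $(\Sigma_2,\sigma)$ is obtained. The paper simply invokes Theorem~1.1 of \cite{FLWZ07} on $p$-adic repellers as a black box, whereas you build the itinerary map $\pi$ by hand and verify directly, from the exact scaling $|\phi(x)-\phi(y)|_p=\lambda|x-y|_p$ on each $B_i$, that length-$n$ cylinders are disks of radius $p^{v_p(a)/2-1}\lambda^{-(n-1)}$. Your route is a self-contained unpacking of that citation --- more elementary and more transparent about where the expansion is used, at the cost of a few extra lines. Both arguments ultimately rely on the same structural fact (two disjoint branches, each mapping bijectively with uniform expansion onto a disk containing both), so neither offers leverage the other lacks; your version just makes the horseshoe mechanism explicit rather than deferring to the general repeller theorem.
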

\begin{proof}
By the proof  of  Lemma \ref{scaling}, we obtain that both of the restricted maps $$\phi: D(x_i,p^{\frac{v_p(a)}{2}-1})\to D(0,p^{-\frac{v_p(a)}{2}-1}), \quad i=1,2$$
are expanding and bijective. Note that  $D(x_i,p^{\frac{v_p(a)}{2}-1})\subset D(0,p^{-\frac{v_p(a)}{2}-1})$ for $i=1,2$.
Let $$\Omega=D(x_1,p^{\frac{v_p(a)}{2}-1})\cup  D(x_2,p^{\frac{v_p(a)}{2}-1})$$ and $$\mathcal{J}=\bigcap_{i=0 }^{\infty}\phi^{-n}(\Omega).$$ By Theorem 1.1 of \cite{FLWZ07}, $\mathcal{J}$ is $\phi$-invariant and
$(\mathcal{J},\phi)$ is topologically conjugate to $( \Sigma_{2},\sigma)$.
 Note that all $x\in \Omega\setminus J$ will eventually fall into $\Q_p\setminus \Omega$ by iteration of $\phi$. Thus by Lemma \ref{goinfty},   we immediately get
  $$\lim_{n\to \infty}\phi^n(x)=\infty,\  \ \forall x\in \Qp\setminus \mathcal{J}.$$
\end{proof}

\begin{proof}[Proof of Theorem \ref{main-thm-1}] It follows directly from Propositions \ref{prop-1} and \ref{prop-2}.
\end{proof}

\subsection{Case $|a|_p<1$}

We distinguish two sub cases: $\sqrt{-a}\notin \Q_p$ and $\sqrt{-a}\in \Q_p$.
\subsubsection{$\sqrt{-a}\notin \Q_p$}
\begin{lemma}\label{6}
If$\sqrt{-a}\notin \Q_p$, then 
for all $ - \lfloor  v_{p}(a)/2\rfloor \leq i \leq \lfloor  v_{p}(a)/2\rfloor$,  $$\phi(S(0,p^i))\subset   S(0,p^{-i})$$ and $\phi^2$ is $1$-Lipschitz continuous on   $S(0,p^i)\cup S(0,p^{-i})$.
\end{lemma}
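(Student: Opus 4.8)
The plan is to set $k:=v_p(a)$, which is a positive integer since $|a|_p<1$, and to base everything on the factorization $\phi(x)-\phi(y)=(x-y)\bigl(a-\tfrac{1}{xy}\bigr)$ already exploited in Lemma~\ref{scaling}. The single quantity controlling both assertions is the pair of valuations $v_p(ax)=k-i$ and $v_p(1/x)=i$ attached to a point $x\in S(0,p^i)$, for which $v_p(x)=-i$.

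For the inclusion $\phi(S(0,p^i))\subset S(0,p^{-i})$ I would compare these two valuations. Because $|i|\le\lfloor v_p(a)/2\rfloor$, one always has $i\le k-i$, with equality only when $k$ is even and $i=k/2$. Whenever the inequality is strict the term $1/x$ strictly dominates $ax$, so $v_p(\phi(x))=i$ and hence $|\phi(x)|_p=p^{-i}$, i.e. $\phi(x)\in S(0,p^{-i})$; this disposes of every $i$ when $k$ is odd and of every $i$ with $i<k/2$ when $k$ is even. The only delicate point, and the main obstacle, is the top sphere $i=k/2$ (with $k$ even), where $v_p(ax)=v_p(1/x)=k/2$ and the leading digits could a priori cancel. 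Writing $a=p^k(a_0+\cdots)$ and $x=p^{-k/2}(x_0+\cdots)$, the first digits of $ax$ and $1/x$ sum to $a_0x_0+x_0^{-1}\equiv x_0^{-1}(a_0x_0^2+1)\pmod p$, which vanishes for some $x_0$ exactly when $-a_0$ is a quadratic residue modulo $p$. By Lemma~\ref{solution} this is precisely the condition for $\sqrt{-a}$ to lie in $\Qp$ when $v_p(a)$ is even; since $\sqrt{-a}\notin\Qp$ no such $x_0$ exists, so no cancellation occurs, $v_p(\phi(x))=k/2=i$, and the inclusion persists at the boundary.

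For the Lipschitz statement I would first apply the inclusion to both $i$ and $-i$, so that $\phi$ exchanges $S(0,p^i)$ and $S(0,p^{-i})$ and $\phi^2$ maps each of these spheres, hence their union, into itself. The factorization gives, for $u,w$ in a common sphere $S(0,p^{i'})$, the local scaling factor $|a-1/(uw)|_p$ with $v_p(1/(uw))=2i'$; since $2i'\le k=v_p(a)$ throughout the admissible range, this factor is at most $p^{-2i'}$, with equality unless $2i'=k$, where possible cancellation can only shrink it further. Composing over one application of $\phi^2$ then yields, for $x,y\in S(0,p^i)$,
\[
|\phi^2(x)-\phi^2(y)|_p=\Bigl|a-\tfrac{1}{\phi(x)\phi(y)}\Bigr|_p\,\Bigl|a-\tfrac{1}{xy}\Bigr|_p\,|x-y|_p\le p^{2i}\cdot p^{-2i}\cdot|x-y|_p=|x-y|_p,
\]
where the factor $p^{2i}$ is the estimate for the sphere $S(0,p^{-i})$ containing $\phi(x)$ and $\phi(y)$. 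The symmetric computation settles $x,y\in S(0,p^{-i})$, while for $x,y$ lying in different spheres (possible only when $i\ne0$) the points sit at distance $\max\{p^i,p^{-i}\}$, which $\phi^2$ preserves since it fixes each sphere setwise; thus $\phi^2$ is $1$-Lipschitz on the whole union. As in part one, the only subtlety is confined to the critical sphere $|i|=k/2$, where $a$ and $1/(uw)$ share a valuation; there the cancellation is harmless for this estimate, as it merely makes $\phi^2$ more strongly contracting.
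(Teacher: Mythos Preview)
Your proof is correct and follows essentially the same route as the paper: the inclusion $\phi(S(0,p^i))\subset S(0,p^{-i})$ is obtained by comparing $v_p(ax)=k-i$ with $v_p(1/x)=i$, with the single critical case $i=k/2$ (for $k$ even) handled via the quadratic-residue criterion of Lemma~\ref{solution}; the Lipschitz bound is obtained from the factorization $\phi(x)-\phi(y)=(x-y)\bigl(a-\tfrac{1}{xy}\bigr)$ applied twice, using $|a-1/(xy)|_p\le p^{-2i}$ and $|a-1/(\phi(x)\phi(y))|_p\le p^{2i}$, exactly as the paper does. The only cosmetic difference is the cross-sphere case $x\in S(0,p^i)$, $y\in S(0,p^{-i})$: the paper observes $|xy|_p=1$ and hence $|a-1/(xy)|_p=1$, so $\phi$ itself is an isometry across the two spheres, whereas you argue that $\phi^2$ preserves each sphere setwise and hence preserves the maximal distance $\max\{p^i,p^{-i}\}$; both are one-line remarks.
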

\begin{proof}
If $x\in S(0,p^{i})$ for some $-\lfloor  v_{p}(a)/2\rfloor\leq  i \leq 0 $, then
by the assumption $|a|_p<1$, we have 
$$|\phi(x)|_p=|ax+1/x|_p=p^{-i}.$$
Now let $x\in S(0,p^{i})$ for some $0\leq  i \leq  \lfloor  v_{p}(a)/2\rfloor$.
When $i < \lfloor  v_{p}(a)/2\rfloor$,  we have
$$|\phi(x)|_p=|ax+1/x|_p=p^{-i}.$$
When  $v_{p}(a)/2$ is even and $i=v_{p}(a)/2$, the condition  $\sqrt{-a}\notin \Q_p$ implies that 
$$|\phi(x)|_p=|ax+1/x|_p=p^{-i}.$$
 Hence, the first assertion of the lemma holds.
 
Let us show that  $\phi^2$ is $1$-Lipschitz continuous on   $S(0,p^i)\cup S(0,p^{-i})$ for $ - \lfloor  v_{p}(a)/2\rfloor \leq i \leq \lfloor  v_{p}(a)/2\rfloor$. 
Let $x,y \in S(0,p^{i})\cap  S(0,p^{-i})$.  If $x\in S(0,p^{i})$ and  $y \in S(0,p^{-i})$, then $|xy|_p=1$ and  thus
$$|\phi(x)-\phi(y)|_p=\left|a-\frac{1}{xy}\right|_p|x-y|_p=|x-y|_p.$$
Hence, it suffices to show that  for each $ - \lfloor  v_{p}(a)/2\rfloor \leq i \leq \lfloor  v_{p}(a)/2\rfloor$, $$ \forall x,y \in S(0,p^{i}), \quad |\phi^2(x)-\phi^{2}(y)|_p\leq |x-y|_p.$$
By observing that  $|a|_p\leq 1/|xy|_p$ and $|a|_p\leq 1/|\phi(x)\phi(y)|_p$, we have 
\begin{align*}  
|\phi^2(x)-\phi^{2}(y)|_p&=  \left|a-\frac{1}{\phi(x)\phi(y)}\right|_p|\phi(x)-\phi(y)|_p\\
&=\left|a-\frac{1}{\phi(x)\phi(y)}\right|_p \left|a-\frac{1}{xy}\right|_p|x-y|_p\\
& \leq \left|\frac{1}{\phi(x)\phi(y)}\right|_p \left|\frac{1}{xy}\right|_p|x-y|_p=|x-y|_p.
\end{align*}

\end{proof}

The field $\mathbb{C}_p$ of $p$-adic complex  numbers is the metric completion of the algebraic closure $\overline{\Q}_p$ of $\Q_p$. We denote by $B(a,r):=\{x\in \mathbb{C}_p, |x|_p\leq r\}$ the closed ball in $\mathbb{C}_p$ centered at $a$ with radius $r>0$.

\begin{lemma}\label{a}
Assume $\sqrt{-a}\notin \Q_p$.  Let $x_0\in \Q_p$ with $ p^{-\lfloor  v_{p}(a)/2 \rfloor}\leq  |x_0|_p \leq p^{\lfloor  v_{p}(a)/2 \rfloor}$. Then  $\phi^2(B(x_0,|x_0|_p/p))\subset B(\phi^2(x_0),|x_0|_p/p)$.
\end{lemma}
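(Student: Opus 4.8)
The plan is to run the estimate on $\phi^2$ through the product formula for its finite differences, after first pinning down the absolute value of $\phi$ on the whole ball. Fix $x\in B(x_0,|x_0|_p/p)$. Since the radius $|x_0|_p/p$ is strictly less than $|x_0|_p$, the ultrametric inequality forces $|x|_p=|x_0|_p$; write this common value as $p^{i}$, so that the hypothesis on $x_0$ reads $-\lfloor v_p(a)/2\rfloor\le i\le\lfloor v_p(a)/2\rfloor$. This is the range covered by Lemma~\ref{6}, which (using $\sqrt{-a}\notin\Q_p$ at the endpoint) gives $|\phi(x_0)|_p=p^{-i}$.

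First I would show that $\phi$ sends $B(x_0,|x_0|_p/p)$ into $B(\phi(x_0),|\phi(x_0)|_p/p)$, which in particular keeps the new centre inside the admissible range and prevents $\phi(x)$ from vanishing (the one point where passing to $\mathbb{C}_p$ could cause trouble). Starting from the identity $\phi(x)-\phi(x_0)=(x-x_0)\bigl(a-\tfrac{1}{xx_0}\bigr)$ and using $|xx_0|_p=p^{2i}$ together with $2i\le v_p(a)$ (from $i\le\lfloor v_p(a)/2\rfloor$), the ultrametric inequality yields $|a-\tfrac{1}{xx_0}|_p\le p^{-2i}$, whence
$$|\phi(x)-\phi(x_0)|_p\le p^{-2i}\,|x-x_0|_p\le p^{-2i}\cdot p^{i-1}=p^{-i-1}.$$
Since $p^{-i-1}<p^{-i}=|\phi(x_0)|_p$, the ultrametric inequality gives $|\phi(x)|_p=p^{-i}\neq 0$; moreover $p^{-i-1}=|\phi(x_0)|_p/p$, so $\phi(x)\in B(\phi(x_0),|\phi(x_0)|_p/p)$.

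With $|\phi(x)|_p=|\phi(x_0)|_p=p^{-i}$ in hand, I would close the argument by the analogous product formula for the second iterate,
$$|\phi^2(x)-\phi^2(x_0)|_p=\Bigl|a-\tfrac{1}{\phi(x)\phi(x_0)}\Bigr|_p\,\Bigl|a-\tfrac{1}{xx_0}\Bigr|_p\,|x-x_0|_p.$$
Here $|\phi(x)\phi(x_0)|_p=p^{-2i}$, and the lower bound $2i\ge -v_p(a)$ (from $i\ge-\lfloor v_p(a)/2\rfloor$) gives $|a-\tfrac{1}{\phi(x)\phi(x_0)}|_p\le p^{2i}$. Combining this with the bound $|a-\tfrac{1}{xx_0}|_p\le p^{-2i}$ from the previous step, the two quadratic factors multiply to at most $p^{2i}\cdot p^{-2i}=1$, so that
$$|\phi^2(x)-\phi^2(x_0)|_p\le |x-x_0|_p\le |x_0|_p/p,$$
which is exactly the asserted inclusion $\phi^2(B(x_0,|x_0|_p/p))\subset B(\phi^2(x_0),|x_0|_p/p)$.

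The computation is otherwise routine; the one place requiring care is that $B$ denotes a ball in $\mathbb{C}_p$, so $x$ need not lie in $\Q_p$ and a naive ``first digit'' cancellation argument for $|\phi(x)|_p$ is unavailable. The remedy, carried out above, is to control $|\phi(x)|_p$ not pointwise but through its difference from $|\phi(x_0)|_p$, reducing everything to the single input $|\phi(x_0)|_p=p^{-i}$ supplied by Lemma~\ref{6} on $\Q_p$. The symmetry of the interval $[-\lfloor v_p(a)/2\rfloor,\lfloor v_p(a)/2\rfloor]$ is what makes the two factors $p^{-2i}$ and $p^{2i}$ reciprocal, turning $\phi^2$ into a (non-expanding) $1$-Lipschitz map on these balls rather than an expanding one.
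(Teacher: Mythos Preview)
Your proof is correct and follows essentially the same route as the paper: both expand $\phi^2(x)-\phi^2(x_0)$ via the telescoping product $\bigl(a-\tfrac{1}{\phi(x)\phi(x_0)}\bigr)\bigl(a-\tfrac{1}{xx_0}\bigr)(x-x_0)$ and bound the two bracketed factors by $p^{2i}$ and $p^{-2i}$ respectively. The one genuine difference is how you justify $|\phi(x)|_p=p^{-i}$ for $x\in\mathbb{C}_p$: the paper simply asserts that $\sqrt{-a}\notin\Q_p$ forces $|x\phi(x)|_p=|ax^2+1|_p=1$ (a claim that ultimately rests on the Krasner-type estimate $|x_0\pm 1/\sqrt{-a}|_p=|x_0|_p$ made explicit only in the proof of Lemma~\ref{7}), whereas you obtain it by first invoking Lemma~\ref{6} for the rational centre $x_0$ and then propagating to $x$ through the difference bound $|\phi(x)-\phi(x_0)|_p\le p^{-i-1}$. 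Your variant is slightly more self-contained and sidesteps any appeal to the location of $\pm 1/\sqrt{-a}$ in $\mathbb{C}_p$, at the cost of one extra line; the paper's version is terser but leans on an observation it only spells out later.
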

\begin{proof}For any  $x \in B(x_0,|x_0|_p/p)$, the condition  $\sqrt{-a}\notin \Q_p$ implies 
$$|x\phi(x)|_p=1.$$ 
Hence
\begin{align*}
|\phi^2(x)-\phi^{2}(x_0)|_p&=  \left|a-\frac{1}{\phi(x)\phi(x_0)}\right|_p|\phi(x)-\phi(x_0)|_p\\
&=\left|a-\frac{1}{\phi(x)\phi(x_0)}\right|_p \left|a-\frac{1}{xx_0}\right|_p|x-x_0|_p\\
& \leq \left|\frac{1}{\phi(x)\phi(x_0)}\right|_p \left|\frac{1}{xx_0}\right|_p|x-x_0|_p=|x-x_0|_p.
\end{align*}
\end{proof}

\begin{lemma}\label{7}
Assume $\sqrt{-a}\notin \Q_p$.  Let $x_0\in \Q_p$ with $ p^{-\lfloor  v_{p}(a)/2 \rfloor}\leq  |x_0|_p \leq p^{\lfloor  v_{p}(a)/2 \rfloor}$. Then the Taylor expansion of the map  $$\phi^2:D(x_0,|x_0|_p/p)\to D(\phi^2(x_0),|x_0|_p/p) $$ can be written as  
$$\phi^2(x+x_0)=\phi^{2}(x_0)+\sum_{i=1}^{\infty}\alpha_i (x+x_0)^{i}, $$ 
with $ \alpha_i \in \Q_p$ such that $$|\alpha_i|_p\leq (p/|x_0|_p)^{i-1}.$$
\end{lemma}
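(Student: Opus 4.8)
The plan is to read the coefficient bounds off the image estimate already established in Lemma \ref{a}, by invoking the non-Archimedean maximum modulus principle, rather than computing the Taylor coefficients by hand. First I would confirm that $\phi^2$ is genuinely analytic on the ball in question, so that a convergent expansion with coefficients in $\Q_p$ exists. A direct composition of $\phi(x)=(ax^2+1)/x$ with itself gives
$$\phi^2(x)=\frac{a^3x^4+(2a^2+1)x^2+a}{x(ax^2+1)},$$
so the only poles of $\phi^2$ are $x=0$ and the two roots of $ax^2+1$. On $B(x_0,|x_0|_p/p)$ every point has absolute value $|x_0|_p\ge p^{-\lfloor v_p(a)/2\rfloor}>0$, so $x=0$ is excluded; moreover the proof of Lemma \ref{a} shows $|x\phi(x)|_p=|ax^2+1|_p=1$ throughout this ball, so $ax^2+1$ does not vanish there either. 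Hence the denominator $x(ax^2+1)=x^2\phi(x)$ has constant absolute value $|x_0|_p$ on the whole ball, $\phi^2$ is analytic on $B(x_0,|x_0|_p/p)$, and it admits a convergent expansion $\phi^2(x_0+x)=\phi^2(x_0)+\sum_{i\ge1}\alpha_ix^i$ with $\alpha_i\in\Q_p$ and no constant term in the local variable $x$.

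Then the key step would be immediate. Put $R=|x_0|_p/p$ and $g(x)=\sum_{i\ge1}\alpha_ix^i=\phi^2(x_0+x)-\phi^2(x_0)$. By Lemma \ref{a}, $\phi^2$ maps the $\mathbb{C}_p$-ball $B(x_0,R)$ into $B(\phi^2(x_0),R)$, which says exactly that $|g(x)|_p\le R$ for every $x\in\mathbb{C}_p$ with $|x|_p\le R$. The maximum modulus principle for a power series convergent on a closed ball of $\mathbb{C}_p$ identifies the supremum with the Gauss norm, so
$$\max_{i\ge1}|\alpha_i|_pR^i=\sup_{|x|_p\le R}|g(x)|_p\le R,$$
whence $|\alpha_i|_p\le R^{1-i}=(p/|x_0|_p)^{i-1}$ for all $i\ge1$, which is the assertion.

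The one point that genuinely requires care — and the reason I would route the argument through the $\mathbb{C}_p$-ball rather than through a bound on $\Q_p$-points — is that the supremum of a power series over only the $\Q_p$-points of a ball can be strictly smaller than its Gauss norm when the residue field is too small (for instance $x^p-x$ maps $\Z_p$ into $p\Z_p$ while having Gauss norm $1$ on $\Z_p$), so the $\Q_p$-version of the estimate would not recover the coefficient bounds. Because Lemma \ref{a} is stated and proved for balls in $\mathbb{C}_p$, the bound $|g(x)|_p\le R$ holds on the full $\mathbb{C}_p$-ball, whose infinite residue field makes the supremum equal to the Gauss norm and lets the principle apply cleanly. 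As a fallback one could instead expand $\phi$ about $x_0$ and about $\phi(x_0)$ — each expansion has the shape $c_0+c_1u+\sum_{k\ge2}(-1)^ku^k/y_0^{k+1}$ about a base point $y_0$ — and compose the two series, bounding the resulting $\alpha_i$ using $|\phi(x_0)|_p=|x_0|_p^{-1}$ and $|x_0\phi(x_0)|_p=1$; this is the laborious computation that the maximum modulus argument is designed to bypass.
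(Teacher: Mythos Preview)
Your proof is correct and follows essentially the same route as the paper's own argument: verify that the poles $0,\infty,\pm 1/\sqrt{-a}$ of $\phi^2$ lie outside $B(x_0,|x_0|_p/p)$ so that the Taylor expansion converges there, invoke Lemma~\ref{a} for the image estimate $\phi^2(B(x_0,R))\subset B(\phi^2(x_0),R)$ with $R=|x_0|_p/p$, and then read off $|\alpha_i|_pR^i\le R$. The paper phrases this last step as an appeal to the Newton polygon while you phrase it as the maximum modulus principle identifying the sup norm on a closed $\mathbb{C}_p$-ball with the Gauss norm; these are the same device, and your explicit remark about why the estimate must be over $\mathbb{C}_p$ rather than $\Q_p$ is a welcome clarification of a point the paper leaves implicit.
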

\begin{proof} 
The assumption  $\sqrt{-a}\notin \Q_p$ implies that there is no $\phi$-preimage of $0$ in $\Q_p$.  However, there are two 
preimages $\pm \frac{1}{\sqrt{-a}}$ of $0$ in the quadratic extension $\Q_p(\sqrt{-a})$.  
For each  $x_0\in \Q_p$ with $ p^{-\lfloor  v_{p}(a)/2 \rfloor}\leq  |x_0|_p \leq p^{\lfloor  v_{p}(a)/2 \rfloor}$,  the condition  $\sqrt{-a}\notin \Q_p$ implies that 
$$\left|x_0-\frac{1}{\sqrt{-a}}\right|_p=\left|x_0+\frac{1}{\sqrt{-a}}\right|_p> {|x_0|_p \over p}.$$
By Lemma \ref{6}, we also  have $$\left|\phi(x_0)-\frac{1}{\sqrt{-a}}\right|_p=\left|\phi(x_0)+\frac{1}{\sqrt{-a}}\right|_p> {|\phi(x_0)|_p \over p}.$$
Hence, the ball  $$B(x_0,|x_0|_p/p):=\left\{x\in \mathbb{C}_p, |x|_p\leq \frac{|x_0|_p}{p}\right\}$$ is disjoint from the set  $\{\pm\frac{1}{\sqrt{-a}}, 0, \infty\}$ of polars of $\phi^2$. 
This implies  that the Taylor expansion of $\phi^2$ at $x_0$ 
$$\phi^2(x+x_0)=\phi^2(x_0)+\sum_{i=1}^{\infty}\alpha_i (x+x_0)^{i},$$
is convergent on the ball  $B(x_0,|x_0|_p/p)$.
By Lemma \ref{a}, we have    $\phi^2(B(x_0,|x_0|_p/p))\subset B(\phi^2(x_0),|x_0|_p/p)$.
The Newton polygon \cite[p. 249]{SilvermanGTM241} gives
$$|\alpha_i|_p\left(\frac{|x_0|_p}{p}\right)^i\leq \frac{|x_0|_p}{p}.$$
That is $|\alpha_i|_p\leq (p/|x_0|_p)^{i-1}.$
\end{proof}

\begin{lemma}\label{8}
If $\sqrt{-a}\notin \Q_p$, then 
for each $x\in \Q_{p}\setminus \{0,\infty\}$, there exists a  positive integer $N$ such that 
$$\phi^{n}(x)\in S(0,p^i)\cup S(0,p^{-i}), \quad \forall n\geq N$$   for some $0\leq i\leq \lfloor v_p(a)/2\rfloor$.
\end{lemma}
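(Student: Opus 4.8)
The plan is to collapse the whole statement onto the behaviour of the single integer $v_p(\phi^n(x))$ and to show that this valuation is eventually trapped in the band $\{k\in\Z:|k|\le\lfloor v_p(a)/2\rfloor\}$, which is exactly the union of spheres in the statement (recall $S(0,p^j)=\{y:v_p(y)=-j\}$). Write $m:=v_p(a)$, so $m\ge 1$ because $|a|_p<1$, and for $y\in\Q_p\setminus\{0\}$ put $k:=v_p(y)$. Since $v_p(ay)=m+k$ and $v_p(1/y)=-k$, the ultrametric inequality gives $v_p(\phi(y))=\min\{m+k,-k\}$ \emph{exactly} whenever $m+k\ne -k$, i.e. whenever $k\ne -m/2$. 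The only valuation at which the two terms can collide is $k=-m/2$ (possible only for $m$ even), which is precisely the central sphere $S(0,p^{m/2})$; there Lemma \ref{6} — and this is the one place where $\sqrt{-a}\notin\Q_p$ is used — guarantees no extra cancellation, so again $v_p(\phi(y))=m/2$. Hence the valuation obeys the deterministic rule $f(k)=m+k$ for $k<-m/2$ and $f(k)=-k$ for $k\ge -m/2$.

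First I would record the invariance of the band. If $|k|\le\lfloor m/2\rfloor$ then $k\ge -m/2$, so $f(k)=-k$ and $|f(k)|=|k|\le\lfloor m/2\rfloor$; this is just Lemma \ref{6}, which says $\phi$ maps $S(0,p^i)\cup S(0,p^{-i})$ into itself (swapping the two spheres) for $0\le i\le\lfloor m/2\rfloor$. Thus once the valuation lies in the band it stays there forever and merely alternates between $k$ and $-k$; in particular $i_0:=|k|$ is frozen, which is what furnishes the single $i$ demanded by the statement.

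The substance is then to reach the band in finitely many steps, which I treat according to the sign of $k$. If $k<-\lfloor m/2\rfloor$ (so $x$ is large) then already $k<-m/2$; if instead $k>\lfloor m/2\rfloor$ (so $x$ is close to $0$) then $k>m/2$, whence $f(k)=-k<-m/2$ and after one step we land on the far side. In either case the orbit eventually sits at a valuation $<-m/2$, where $f$ acts by $k\mapsto k+m$, a strict increase by $m$ with no cancellation (the two terms having distinct valuations). Hence the valuations $k,k+m,k+2m,\dots$ climb monotonically; if $N$ is the first index with $k_N\ge -m/2$ then $k_{N-1}<-m/2$ forces $k_N=k_{N-1}+m<m/2$, so the integer $k_N$ lies in $[-m/2,m/2)$ and therefore $|k_N|\le\lfloor m/2\rfloor$: the orbit has entered the band. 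Combined with the invariance above, this produces an $N$ and a fixed $0\le i\le\lfloor m/2\rfloor$ with $\phi^n(x)\in S(0,p^i)\cup S(0,p^{-i})$ for all $n\ge N$.

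The only delicate point — and the main obstacle — is the possible collision of valuations on the central sphere $S(0,p^{m/2})$ when $m$ is even, where the naive minimum-of-valuations rule could fail through cancellation; it is exactly such a cancellation (producing a $\phi$-preimage of $0$, hence an escape to $\infty$) that occurs in the complementary case $\sqrt{-a}\in\Q_p$. Here, however, Lemma \ref{6} has already excluded it, so the valuation dynamics are governed cleanly by $f$ and the monotone climbing argument goes through. Everything else is elementary bookkeeping of $p$-adic valuations.
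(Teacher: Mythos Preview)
Your proof is correct and follows essentially the same route as the paper's: reduce to the behaviour of the valuation, use Lemma~\ref{6} for the invariance of the band and for the sole potentially cancelling sphere $S(0,p^{m/2})$, send small points to large points in one step, and then let the large points drift down by $m$ in valuation until they enter the band. Your explicit reformulation via the piecewise map $f(k)$ on $\Z$ is a tidier presentation of the same idea, and your verification that the first landing valuation satisfies $|k_N|\le\lfloor m/2\rfloor$ is more carefully written than the paper's corresponding step.
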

\begin{proof} 
Note that $|\phi(x)|_p=|{1}/{x}|_p$, if $|x|_p<  p^{-\lfloor v_p(a)/2\rfloor}$. Thus we have
$$\phi(D(0,p^{-\lfloor v_p(a)/2\rfloor-1}))=\P\setminus D(0,p^{ \lfloor v_p(a)/2\rfloor}).$$
It suffices to show that the statement holds for $x\in  \P\setminus D(0,p^{ \lfloor v_p(a)/2\rfloor})$.
 In fact, if $x\in  \P\setminus D(0,p^{ \lfloor v_p(a)/2\rfloor}+1) $,  one can check that 
 $$|\phi(x)|_p=|a|_p|x|_p.$$
 So there exists an integer $N$ such that $p^{- \lfloor v_p(a)/2\rfloor}\leq |\phi^N(x)|_p\leq p^{ \lfloor v_p(a)/2\rfloor}$. 
If $x\in S(0,p^{ \lfloor v_p(a)/2\rfloor})$, the conclusion is followed by Lemma \ref{6}.
  
\end{proof}

\begin{proposition}\label{prop-3}
Assume that  $\sqrt{-a}\notin \Q_p$. Then $\Q_p\setminus\{0\}$ is $\phi$-invariant
and the sub dynamical system $(\Q_p\setminus \{0\},\phi)$ can be decomposed into 
$$\P= \mathcal{P} \bigsqcup \mathcal{M} \bigsqcup \mathcal{B}
$$
where $ \mathcal{P}$ is the finite set consisting of all periodic points of
$\phi$, $\mathcal{M}= \bigsqcup_i \mathcal{M}_i$ is the union of all (at most countably
many) clopen invariant sets such that each $\mathcal{M}_i$ is a finite union
of balls and each subsystem $\phi: \mathcal{M}_i \to \mathcal{M}_i$ is minimal, and each
point in $\mathcal{B}$ lies in the attracting basin of a periodic orbit or of
a minimal subsystem.
\end{proposition}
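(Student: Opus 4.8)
The plan is to first establish the invariance statement and then collapse the entire dynamics onto a $1$-Lipschitz system, to which a minimal decomposition theorem can be applied. Since $|a|_p<1$ and $\sqrt{-a}\notin\Qp$, the equation $\phi(x)=0$, equivalently $ax^2+1=0$, has no solution in $\Qp$; as $\phi(x)$ is finite for every $x\neq 0$, this shows that $\phi$ carries $\Qp\setminus\{0\}$ into itself, which gives the invariance.

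Next I would localise the dynamics using Lemma \ref{8}: every $x\in\Qp\setminus\{0\}$ eventually enters, and by Lemma \ref{6} thereafter remains in, the annulus $\mathcal{A}:=\{x\in\Qp: p^{-\lfloor v_p(a)/2\rfloor}\leq|x|_p\leq p^{\lfloor v_p(a)/2\rfloor}\}$, which is the finite union of the spheres $S(0,p^{\pm i})$, $0\leq i\leq\lfloor v_p(a)/2\rfloor$. Lemma \ref{6} shows that $\phi$ maps $S(0,p^i)$ into $S(0,p^{-i})$ and vice versa, so each such sphere is $\phi^2$-invariant while $\phi$ interchanges the two members of each pair; moreover $\phi^2$ is $1$-Lipschitz on $S(0,p^i)\cup S(0,p^{-i})$. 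On the central sphere $S(0,1)$ one has $|a-1/(xy)|_p=1$ for all $x,y$ there, so $\phi$ itself is an isometry.

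The core step is to decompose each $\phi^2$-invariant sphere. Such a sphere is a finite disjoint union of balls, i.e. a compact-open subset of $\Qp$, and Lemmas \ref{6}, \ref{a} and \ref{7} supply precisely the required input — $1$-Lipschitz continuity together with the local Taylor bounds $|\alpha_i|_p\leq(p/|x_0|_p)^{i-1}$ on each ball — to apply the minimal decomposition theorem for $1$-Lipschitz dynamical systems, in the spirit of Theorems \ref{thm-decomposition} and \ref{thm2}. This partitions each sphere into the finite set of its $\phi^2$-periodic points, at most countably many $\phi^2$-minimal components, each a finite union of balls, and an attracting part.

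Finally I would reassemble these $\phi^2$-decompositions into a $\phi$-decomposition of $\mathcal{A}$, and hence of $\Qp\setminus\{0\}$. The $\phi^2$-periodic points coincide with the $\phi$-periodic points, so $\mathcal{P}$ is unchanged. Using that $\phi$ conjugates $\phi^2|_{S(0,p^i)}$ with $\phi^2|_{S(0,p^{-i})}$, a $\phi^2$-minimal component $M\subset S(0,p^i)$ is carried to a $\phi^2$-minimal component $\phi(M)\subset S(0,p^{-i})$, and the $\phi$-orbit of any point of $M$ is dense in $M\cup\phi(M)$; hence $M\cup\phi(M)$ is a single $\phi$-minimal component (the sphere $S(0,1)$, where $\phi$ is an isometry, is handled directly). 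The attracting parts combine in the same fashion, and every point lying outside $\mathcal{A}$ belongs to the attracting basin by Lemma \ref{8}. I expect the main obstacle to be exactly this passage from $\phi^2$ to $\phi$: one must verify that pairing $\phi^2$-minimal sets across $S(0,p^i)\leftrightarrow S(0,p^{-i})$ yields genuinely $\phi$-minimal components, with none inadvertently split or merged, and keep careful track of how exact periods are halved or doubled when passing between $\phi$ and $\phi^2$.
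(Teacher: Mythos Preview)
Your plan matches the paper's: localise via Lemmas \ref{6} and \ref{8}, decompose each $(S(0,p^i),\phi^2)$ using the Taylor bounds of Lemma \ref{7}, then reassemble for $\phi$ (the paper in fact leaves the $\phi^2\to\phi$ passage implicit, so your caution there is well placed). The one refinement the paper makes explicit is that neither Theorem \ref{thm-decomposition} nor Theorem \ref{thm2} applies as stated; instead it follows $\overline{\phi^2}$-cycles of the balls $D(jp^{-i},p^{i-1})$, conjugates $\phi^{2k}$ on one such ball to $\Zp$ via the affine map $x\mapsto p^{i-1}(x-x_1p^{-i})$, and then the bounds $|\alpha_j|_p\le p^{-(i-1)(j-1)}$ make the conjugate an integral convergent power series so that Theorem~1.1 of \cite{FLpre} yields the decomposition.
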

\begin{proof}
By Lemmas \ref{6} and \ref{8}, it suffices to show that the sub dynamical system $$(S(0,p^i), \phi^2)$$ has a minimal decomposition as stated in the proposition for all  $ -\lfloor v_p(a)/2\rfloor \leq i \leq  \lfloor v_p(a)/2\rfloor.$ 
Note  that $\phi^2$ is equicontinuous on  $S(0,p^i)$ and $$S(0,p^i)=\bigcup_{j=1}^{p-1} D(jp^{-i}, p^{i-1}).$$
Thus for each disk $D(jp^{-i}, p^{i-1})$ in $S(0,p^i)$, its image $\phi^2(D(jp^{-i}, p^{i-1})$ is still a disk in $S(0,p^i)$.
Hence $\phi^2$ induces  a map $\overline{\phi^2}$ from the set $\{D(jp^{-i}, p^{i-1}): j=\{1,2,\cdots,p-1\} \}$ of disks to itself:
\[
\overline{\phi^2}(D(jp^{-i}, p^{i-1})):=\phi^2(D(jp^{-i}, p^{i-1})).
\]

Assume that $(D(x_1p^{-i}, p^{i-1}),D(x_2p^{-i}, p^{i-1}), \cdots, D(x_kp^{-i}, p^{i-1}))$ is a $k$-cycle of $\overline{\phi^2}$, i.e.
\begin{align*}
\overline{\phi^2}(D(x_1p^{-i}, p^{i-1}))=D(x_2p^{-i}, p^{i-1})\\
\overline{\phi^2}(D(x_2p^{-i}, p^{i-1}))=D(x_3p^{-i}, p^{i-1}),\\
 \cdots  \quad \quad \quad \quad \quad \quad  \quad \quad \quad \\
 \overline{\phi^2}(D(x_kp^{-i}, p^{i-1}))=D(x_1p^{-i}, p^{i-1}).
\end{align*}
Then $\phi^{2k}$ is a transformation on $D(x_1p^{-i}, p^{i-1})$.
We thus study the dynamical system  $(D(x_1p^{-i}, p^{i-1}), \phi^{2k})$. By Lemma \ref{8}, the Taylor expansion of the map  $$\phi^{2k}:D(x_1p^{-i}, p^{i-1})\to D(x_1p^{-i}, p^{i-1}) $$ can be written as  
$$\phi^{2k}(x+x_1p^{-i})=\phi^{2k}(x_1p^{-i})+\sum_{j=1}^{\infty}\alpha_j (x+x_1p^{-i})^{j}, $$ 
with $ \alpha_j \in \Q_p$ such that 
\begin{align}\label{aaa}
|\alpha_j|_p\leq 1/p^{(i-1)(j-1)}.
\end{align}
Thus one can check that the dynamical system $(D(x_1p^{-i}, p^{i-1}), \phi^{2k})$ is conjugate to a dynamical system on $\mathbb{Z}_p$, denoted by $(\mathbb{Z}_p,\psi)$, through the conjugacy
$$f:D(x_1p^{-i}, p^{i-1})\to D(0,1), \hbox{ with } f(x)=p^{i-1}(x-x_1p^{-i}).$$ 
In fact, the inequality (\ref{aaa}) implies that $\psi$ is a convergent  series with integer coefficients. By Theorem 1.1 of \cite{FLpre}, 
the system $(\mathbb{Z}_p,\psi)$ has a minimal decomposition.
Hence, the conjugated dynamical system $(D(x_1p^{-i}, p^{i-1}), \phi^{2k})$ has a corresponding minimal decomposition which implies  that 
 $$(S(0,p^i), \phi^2)$$ has a minimal decomposition as stated in the proposition for all  $-\lfloor v_p(a)/2\rfloor \leq i \leq  \lfloor v_p(a)/2\rfloor.$

\end{proof}

%

%

\subsubsection{Case $\sqrt{-a}\in \Q_p$}

\begin{lemma}\label{gotozero}
If $|a|<1$ and $\sqrt{-a}\in \Q_p$, then $$\phi\left(D\Big( \pm {1 \over \sqrt{-a}}, \frac{1}{p\sqrt{|a|_p}}\Big)\right)= D\Big(0, \frac{\sqrt{|a|_p}}{p}\Big),$$  and for all $x,y \in D( \pm {1 \over \sqrt{-a}}, \frac{1}{p\sqrt{|a|_p}})$,
$$|\phi(x)-\phi(y)|_p= |a|_p |x-y|_p.$$.
\end{lemma}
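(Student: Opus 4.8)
The plan is to reduce the entire statement to one local estimate, just as in the proof of Lemma \ref{scaling}. Set $c=1/\sqrt{-a}$; the case of $-c$ is identical. First I would record two elementary facts. Since $\phi(x)=(ax^2+1)/x$, the equation $\phi(x)=0$ amounts to $x^2=-1/a$, so $\pm c$ are exactly the two $\phi$-preimages of $0$ and in particular $\phi(c)=0$. Next, $|c|_p=1/\sqrt{|a|_p}$, whence the radius $1/(p\sqrt{|a|_p})=|c|_p/p$ is strictly smaller than $|c|_p$; consequently every point $x$ of $D(c,|c|_p/p)$ has $|x|_p=|c|_p$, and every product of two such points has absolute value $|c|_p^2=1/|a|_p$.

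The core of the argument is the exact scaling identity. Starting from the factorization
\[
\phi(x)-\phi(y)=(x-y)\left(a-\frac{1}{xy}\right),
\]
everything reduces to proving $|a-1/(xy)|_p=|a|_p$. Writing $x=c+u$, $y=c+v$ with $|u|_p,|v|_p\le |c|_p/p$ and using $ac^2=-1$, I compute
\[
axy-1=-2+ac(u+v)+auv,
\]
where the last two terms have absolute values at most $1/p$ and $1/p^2$ respectively. This is the decisive point where $p\ge 3$ enters: the unit $2$ satisfies $|2|_p=1$ and dominates the two small terms, so $|axy-1|_p=1$ and hence $|a-1/(xy)|_p=|axy-1|_p/|xy|_p=|a|_p$. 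Therefore
\[
|\phi(x)-\phi(y)|_p=|a|_p\,|x-y|_p \qquad \text{for all } x,y\in D(c,|c|_p/p),
\]
which is the second assertion of the lemma.

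It remains to identify the image. Because $\phi(c)=0$ and $\phi$ multiplies all distances on $D(c,|c|_p/p)$ by the fixed factor $|a|_p$, the restriction of $\phi$ is an injective similarity and therefore maps the disk onto the disk centred at $\phi(c)=0$ of radius $|a|_p\cdot |c|_p/p=\sqrt{|a|_p}/p$, exactly as was used in Lemma \ref{scaling}. If an explicit surjectivity check is preferred, each $w\in D(0,\sqrt{|a|_p}/p)$ is hit by solving $ax^2-wx+1=0$: the discriminant factors as $w^2-4a=-4a\,(1-w^2/(4a))$, which is a square in $\Q_p$ since $-4a=(2\sqrt{-a})^2$ and $1-w^2/(4a)\equiv 1\pmod p$ is a square by Hensel's lemma; the two roots lie at distance $|c|_p$, so at most one lies in $D(c,|c|_p/p)$, and the root degenerating to $c$ as $w\to 0$ is seen to lie there by the same ultrametric estimate. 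Either way $\phi(D(c,|c|_p/p))=D(0,\sqrt{|a|_p}/p)$.

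The only real obstacle is the non-cancellation $|a-1/(xy)|_p=|a|_p$: both $a$ and $1/(xy)$ have the same absolute value $|a|_p$, and it is precisely the unit $2$ — available because $p\ge 3$ — that keeps their difference from dropping below $|a|_p$. All the remaining steps are routine applications of the ultrametric inequality.
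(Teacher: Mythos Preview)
Your proof is correct and follows essentially the same route as the paper's: both reduce everything to showing $|a-1/(xy)|_p=|a|_p$ by expanding around the centre of the disk and invoking $|2|_p=1$. The only cosmetic difference is that the paper parametrizes via reciprocals, writing $x=1/(\sqrt{-a}+x')$ with $x'\in D(0,\sqrt{|a|_p}/p)$ in direct imitation of Lemma~\ref{scaling}, whereas you use the additive substitution $x=c+u$; your version is arguably more direct, and your explicit surjectivity argument fills in a step the paper leaves implicit.
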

\begin{proof}
Note that  $ \phi(\pm 1/\sqrt{-a})=0$. It suffices to show that 
$$\forall x, y \in D( \pm {1 \over \sqrt{-a}}, \frac{1}{p\sqrt{|a|_p}}),\quad |\phi(x)-\phi(y)|_p= |a|_p |x-y|_p.$$ 
Without loss of generality, we assume that $x,y \in D( {1 \over \sqrt{-a}}, \frac{1}{p\sqrt{|a|_p}})$. By the same arguments in the proof of Lemma \ref{scaling}, there exist $x^{\prime},y^{\prime}\in D(0, \frac{\sqrt{|a|_p}}{p})$
such that 
$$x=\frac{1}{\sqrt{-a}+x^{\prime}},  \quad y=\frac{1}{\sqrt{-a}+y^{\prime}},$$
and
$$\left|a-\frac{1}{xy}\right|_p= \left|2a-(x^{\prime}+y^{\prime})\sqrt{-a}-x^{\prime}y^{\prime}\right|_p.$$
Since $x^{\prime},y^{\prime}\in D(0, \frac{\sqrt{|a|_p}}{p})$, we immediately get 
 $$\left|a-\frac{1}{xy}\right|_p=|a|_p.$$
So 
$$|\phi(x)-\phi(y)|_p=\left|a-\frac{1}{xy}\right|_p |x-y|_p=|a|_p |x-y|_p.$$
\end{proof}

\begin{proposition}\label{prop-4}
If $|a|_p<1$ and $\sqrt{-a}\in \Q_p$, then there exists an $\phi$-invariant subset $\mathcal{J}$ such that 
$(\mathcal{J}, \phi)$ is topologically conjugate to a subshift of finite type $(\Sigma_{A},\sigma),$
where the transition matrix is
$$A=\begin{pmatrix} 0 & 0& 1 & 0   \\ 0&0&1&0\\  0&0&0&1 \\  1&1&0&1 \end{pmatrix}.$$
The topological entropy of the system $(\P, \phi)$ is larger than $\log 1.69562...$.
 \end{proposition}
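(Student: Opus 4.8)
The plan is to realize $\phi$ as a $p$-adic (weak) repeller on a union of four pairwise disjoint balls whose incidence matrix is exactly $A$, and then to invoke Theorem 1.1 of \cite{FLWZ07}. Since $|a|_p<1$ and $\sqrt{-a}\in\Q_p$, the valuation $v_p(a)=2m$ is a positive even integer; I would set $s=\sqrt{-a}$, so that $|s|_p=p^{-m}$ and the two solutions of $\phi(x)=0$ are $\pm 1/s$ with $|{\pm}1/s|_p=p^{m}$. Then I would take
$$B_1=D\!\left(\tfrac1s,p^{m-1}\right),\quad B_2=D\!\left(-\tfrac1s,p^{m-1}\right),\quad B_3=D(0,p^{-m-1}),\quad B_4=\P\setminus D(0,p^{m}).$$
These four balls are pairwise disjoint, since their points have $p$-absolute values $p^m$, $p^m$, $\le p^{-m-1}$ and $\ge p^{m+1}$ respectively, and the two centers satisfy $|1/s-(-1/s)|_p=|2/s|_p=p^m>p^{m-1}$ because $p\geq 3$. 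The set $\Omega:=B_1\cup B_2\cup B_3\cup B_4$ will serve as the state space.

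The heart of the argument is to verify the four transition relations encoded by $A$. Lemma \ref{gotozero}, with $\sqrt{|a|_p}=|s|_p=p^{-m}$, gives at once $\phi(B_1)=\phi(B_2)=B_3$. For the arrow $B_3\to B_4$, on $B_3\setminus\{0\}$ the term $1/x$ dominates (indeed $|1/x|_p\ge p^{m+1}>|ax|_p$), so $\phi$ restricts to a bijection of $B_3\setminus\{0\}$ onto $\{|y|_p\ge p^{m+1}\}$, which together with $\phi(0)=\infty$ yields $\phi(B_3)=B_4$. For the arrow $B_4\to\{B_1,B_2,B_4\}$, on $B_4$ the term $ax$ dominates (for $|x|_p\ge p^{m+1}$ one has $|ax|_p=p^{-2m}|x|_p\ge p^{-m+1}>|1/x|_p$), whence $\phi(B_4)=\{|y|_p\ge p^{-m+1}\}\cup\{\infty\}$; this disk contains $B_1$, $B_2$ and $B_4$ but is disjoint from $B_3$. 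These four computations reproduce the rows of $A$ exactly, and since $\phi$ is injective on each $B_i$, every admissible arrow is realized by a full sub-ball mapping bijectively onto its target.

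Because $B_4$ surrounds the repelling fixed point $\infty$, distances must be measured in the spherical metric $\rho$. A short computation with $\rho$ shows that $\phi$ acts as a $\rho$-isometry on $B_1$, $B_2$ (onto $B_3$) and on $B_3$ (onto $B_4$), while on $B_4$ it expands by the factor $1/|a|_p=p^{2m}>1$. The main obstacle is precisely this: the individual branches $B_1,B_2\to B_3$ and $B_3\to B_4$ are only isometries, not strict expansions, so one cannot quote the repeller hypothesis verbatim. The expansion must instead be recovered from the return map to $B_4$: every infinite $A$-admissible itinerary visits the symbol $4$ with return gaps at most $3$ (from $1$ or $2$ one is forced through $3$ back to $4$), so the compositions of $\phi$ along cylinder sets expand by at least $p^{2m}$ per return, forcing the nested cylinders to shrink to single points. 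Checking that this net expansion suffices for the injectivity of the coding map, and that $(\Omega,\phi)$ thereby fits the transitivity-plus-expansion framework of \cite{FLWZ07}, is the delicate part; granting it, Theorem 1.1 of \cite{FLWZ07} gives a topological conjugacy $(\mathcal{J},\phi)\cong(\Sigma_A,\sigma)$ with $\mathcal{J}=\bigcap_{n\ge 0}\phi^{-n}(\Omega)$.

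Finally, the entropy bound follows from the spectral radius of $A$. By Perron--Frobenius, $h_{\mathrm{top}}(\Sigma_A,\sigma)=\log\lambda_{\max}(A)$, and one computes $\det(\lambda I-A)=\lambda^4-\lambda^3-2\lambda=\lambda(\lambda^3-\lambda^2-2)$, whose largest root is $\lambda_{\max}=1.69562\ldots$, the real root of $\lambda^3-\lambda^2-2=0$. Since $\mathcal{J}$ is a subsystem of $(\P,\phi)$, and topological entropy is invariant under conjugacy and monotone under subsystems, I would conclude $h_{\mathrm{top}}(\P,\phi)\ge h_{\mathrm{top}}(\mathcal{J},\phi)=\log\lambda_{\max}>\log 1.69562$.
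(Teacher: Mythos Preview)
Your strategy---identify four regions whose transition pattern is $A$ and invoke \cite{FLWZ07}---is the paper's, and your verification of the four arrows is correct. The gap you yourself flag is real, and it is precisely where the paper's argument diverges from yours: rather than work directly in the spherical metric on $\P$, the paper first conjugates by the M\"obius map $f(x)=1/(x-1)$, which sends $\infty\mapsto 0$ and carries the four regions into pairwise disjoint balls contained in $\Zp$. One then checks the hypotheses of \cite{FLWZ07} for $\psi=f\circ\phi\circ f^{-1}$ in the ordinary $p$-adic metric, where that theorem is actually stated. After conjugation three of the branches are still isometries and only the branch through the image of your $B_4$ expands (by $|a|_p^{-1}$), but since every cycle of the incidence graph $A$ passes through state $4$, no admissible orbit can avoid the expanding branch and the cylinder diameters shrink to zero; this is exactly the weak-repeller setting that \cite{FLWZ07} handles. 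Your return-map heuristic is the right intuition, but the clean way to close the argument is the paper's conjugation into $\Zp$, which places you squarely in the framework where \cite{FLWZ07} applies rather than ``granting it.''

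A minor remark: your radii for $B_1,\dots,B_4$ differ from the paper's $D_1,\dots,D_4$ (the paper takes $D_1,D_2$ of radius $1/p$, $D_3=D(0,|a|_p/p)$, and $D_4=\P\setminus D(0,|a|_p)$). Either choice yields the same incidence matrix, and your balls have the virtue of being pairwise disjoint already before conjugation.
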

 
\begin{proof}
Let $D_1=D( 1/\sqrt{-a}, 1/p),$ $ D_2= D(- 1/\sqrt{-a}, 1/p)$, 
$D_3=D( 0, {|a|_p}/{p})$   and $D_4=\P\setminus D(0, |a|_p)$. 
By Lemma \ref{gotozero}, the restricted maps $\phi:D_1 \to D_3$ and  $\phi:D_2 \to D_3$  are both bijective. 
One can also check directly that  $\phi:D_3\to D_4$ and  $\phi: D_4\to \phi(D_4)=\P\setminus D(0,1)$ are  
bijective. 

Set  $\Omega=\bigcup_{i=1}^{4}D_i$ and consider the restricted map $\phi: \Omega\to \P$. Let $f: x\mapsto1/(x-1)$ and $\psi:= f\circ\phi\circ f^{-1}$. 
We  study the map $\psi:  f(\Omega)\to \P$. Observe that $f(\Omega)\subset \mathbb{Z}_p$. One can check that $\psi$ satisfies the conditions in  \cite{FLWZ07}.  Thus by the main result of \cite{FLWZ07},   $$\mathcal{J^{\prime}}=\bigcap_{i=0}\psi^{-i}(\Omega)$$ is an invariant set of $\psi$ and $(\mathcal{J^{\prime}}, \psi)$ is topologically conjugate to the subshift of finite type $(\Sigma_{A},\sigma)$ with $$A=\begin{pmatrix} 0 & 0& 1 & 0   \\ 0&0&1&0\\  0&0&0&1 \\  1&1&0&1 \end{pmatrix}.$$
Since $\psi= f\circ\phi\circ f^{-1}$, we deduce that
$(\mathcal{J}, \phi)$ is topologically conjugate to $(\Sigma_{A},\sigma).$

The topological entropy of $(\Sigma_{A},\sigma)$ is $\log (1.69562...)$ where $1.69562...$ is the maximal eigenvalue of the matrix $A$. Since $(\Sigma_{A},\sigma)$ is topologically conjugate to a subsystem of $(\P, \phi)$, we confirm the last assertion of the proposition.
\end{proof}
We remark that even though a chaotic subsystem is well described, the detailed dynamical structure of $\phi$ on the whole space for the case $|a|_p<1$ and $\sqrt{-a}\in \Q_p$ is far from clear. There may exist more complicated sub dynamical systems.

\begin{proof}[Proof of Theorem \ref{main-thm-2}] It follows directly from Propositions \ref{prop-3} and \ref{prop-4}.
\end{proof}

\bibliographystyle{plain}

\begin{thebibliography}{}

\end{thebibliography}


\begin{thebibliography}{10}

\bibitem{Ana94}
V.~S. Anashin,
\newblock Uniformly distributed sequences of {$p$}-adic integers.
\newblock {\em Mat. Zametki}, 55(2):3--46, 188, 1994.


  
\bibitem{Anashin-Khrennikov-AAD}
 V.~S. Anashin and A. Khrennikov,
\newblock {\em Applied Algebraic Dynamics,} de  { Gruyter Expositions in Mathematics}.
{49}. Walter de Gruyter \& Co., Berlin, 2009.

\bibitem{AKY11}
V.~S. Anashin, A.~Khrennikov, and E.~I. Yurova.
\newblock Characterization of ergodic {$p$}-adic dynamical systems in terms of
  the van der {P}ut basis.
\newblock {\em Dokl. Akad. Nauk}, 438(2):151--153, 2011.


 \bibitem{ARS13}	 S. Albeverio, U. A. Rozikov and I. A. Sattarov,
	 \newblock  $p$-adic $(2,1)$-rational dynamical systems,
	 \newblock {\em J. Math. Anal. Appl.}, 398 (2): 553--566, 2013.

\bibitem{Baker-Rumely-book}
M. Baker and R. Rumely,
\newblock {\em Potential theory and dynamics on the {B}erkovich projective
  line}, volume 159 of { Mathematical Surveys and Monographs},
\newblock American Mathematical Society, Providence, RI, 2010.

\bibitem{Benedetto-Hyperbolic-maps}
R.~L. Benedetto,
\newblock Hyperbolic maps in {$p$}-adic dynamics,
\newblock {\em Ergodic Theory Dynam. Systems}, 21 (1):1--11, 2001.

%
%

\bibitem{CP11Ergodic}
Z.~Coelho and W.~Parry,
\newblock Ergodicity of {$p$}-adic multiplications and the distribution of
  {F}ibonacci numbers,
\newblock In {\em Topology, ergodic theory, real algebraic geometry}, volume
  202 of {\em Amer. Math. Soc. Transl. Ser. 2}, pages 51--70. Amer. Math. Soc.,
  Providence, RI, 2001.

\bibitem{DKM07}
B. Dragovich, A. Khrennikov, and D. Mihajlovi{\'c},
\newblock Linear fractional {$p$}-adic and adelic dynamical systems,
\newblock {\em Rep. Math. Phys.}, 60 (1): 55--68,2007.

\bibitem{DSV06}
V. Dremov, G. Shabat and P. Vytnova, 
\newblock On the chaotic properties of quadratic maps
over non-Archimedean fields, 
\newblock {\em $p$-adic mathematical physics}, 43--54, AIP Conf. Proc.,
826, Amer. Inst. Phys., Melville, NY, 2006.	


\bibitem{DP09}
F.~Durand and F.~Paccaut,
\newblock Minimal polynomial dynamics on the set of 3-adic integers,
\newblock {\em Bull. Lond. Math. Soc.}, 41(2):302--314, 2009.

\bibitem{FFLW2014}
A.H. Fan, S.~L. Fan, L.~M. Liao, and Y.~F. Wang,
\newblock On minimal decomposition of {$p$}-adic homographic dynamical systems,
\newblock {\em Adv. Math.}, 257:92--135, 2014.

\bibitem{FFLW2016}
A.H. Fan, S.~L. Fan, L.~M. Liao, and Y.~F. Wang,
\newblock Minimality of $p$-adic rational maps with good reduction,
\newblock {\em preprint}, arXiv:1511.04856.

\bibitem{FLYZ07}
A.~H. Fan, M.~T. Li, J.~Y. Yao, and D.~Zhou,
\newblock Strict ergodicity of affine {$p$}-adic dynamical systems on {$\Bbb
  Z_p$},
\newblock {\em Adv. Math.}, 214(2):666--700, 2007.

\bibitem{FL11}
A.~H. Fan and L.~M. Liao,
\newblock On minimal decomposition of $p$-adic polynomial dynamical systems,
\newblock {\em Adv. Math.}, 228:2116--2144, 2011.


\bibitem{FLWZ07}
A.~H. Fan, L.~M. Liao, Y.~F. Wang, and D. Zhou,
\newblock $p$-adic repellers in $\mathbb{Q}_p$ are subshifts of finite type, 
\newblock {\em C. R. Acad. Sci. Paris, Ser. I},  344:219--224, 2007



\bibitem{FLpre}
S.~L. Fan and L.~M. Liao,
\newblock Dynamics of convergent power series on the integral ring of a finite
  extension of $\mathbb{Q}_p$,
\newblock {\em J. Differential Equations}, 259(4):1628--1648, 2015.

\bibitem{FLSq}
S.~L. Fan and L.~M. Liao,
\newblock Dynamics of the square mapping on the ring of $p$-adic integers,
\newblock  {\em Proc. Amer. Math. Soc. },  144(3):1183--1196, 2016.

\bibitem{FLCh}
S.~L. Fan and L.~M. Liao,
\newblock Dynamics of Chebyshev polynomials on $\mathbb{Z}_2$
\newblock  {\em J. Number Theor.},  169:174--182, 2016.


\bibitem{Hsia-periodic-points-closure}
L. C. Hsia,
\newblock Closure of periodic points over a non-{A}rchimedean field,
\newblock {\em J. London Math. Soc. (2)}, 62 (3):685--700, 2000.

\bibitem{Jeong2013}
S.~Jeong,
\newblock Toward the ergodicity of {$p$}-adic 1-{L}ipschitz functions
  represented by the van der {P}ut series,
\newblock {\em J. Number Theory}, 133(9):2874--2891, 2013.

	
\bibitem{KM06}
 M. Khamraev and F. M. Mukhamedov,
 \newblock On a class of rational $p$-adic dynamical systems,
 \newblock {\em J. Math. Anal. Appl.}, 315 (1):76--89, 2006.

\bibitem{Mahler81}	
K. Mahler,
 \newblock $p$-Adic Numbers and Their Functions, second edition, 
  \newblock  {\em Cambridge Tracts in Math.}, vol.76, Cambridge University Press, Cambridge,1981.
	
 \bibitem{MR04}
 F. M. Mukhamedov and  U. A. Rozikov, 
\newblock On rational $p$-adic dynamical systems,
\newblock {\em Methods Funct. Anal. Topology}, 10 (2):21--31, 2004.

\bibitem{OZ75}
R.~Oselies and H.~Zieschang,
\newblock Ergodische {E}igenschaften der {A}utomorphismen {$p$}-adischer
  {Z}ahlen,
\newblock {\em Arch. Math. (Basel)}, 26:144--153, 1975.




\bibitem{Rivera-Letelier-Dynamique-rationnelles-corps-locaux}
J. Rivera-Letelier,
\newblock Dynamique des fonctions rationnelles sur des corps locaux,
\newblock {\em Ast\'erisque}, (287):xv, 147--230, 2003.
\newblock Geometric methods in dynamics. II.


\bibitem{SilvermanGTM241}
J.~Silverman,
\newblock {\em The arithmetic of dynamical systems}, volume 241 of {\em
  Graduate Texts in Mathematics}.
\newblock Springer, New York, 2007.

\bibitem{Satt2015}
I. A.  Sattarov, 
\newblock $p$-adic $(3,2)$-rational dynamical systems. 
\newblock{\em $p$-Adic Numbers Ultrametric Anal. Appl.} 7(1):39--55, 2015. 

\bibitem{TVW89}
E. Thiran, D. Verstegen and J. Weyers, 
\newblock $p$-adic dynamics, 
\newblock{\em J. Statist. Phys.}, 54(3-4): 893--913, 1989.

\bibitem{WS98}
C. F. Woodcock and N. P. Smart, 
\newblock $p$-adic chaos and random number generation, 
\newblock{\em Experiment Math.}, 7(4): 333--342, 1998.


\end{thebibliography}

\end{document}